\newcommand{\dis}{\displaystyle}
\newcommand{\lb}{\lambda}
\newcommand{\p}{\partial}
\newcommand{\beq}{\begin{equation}}
\newcommand{\eeq}{\end{equation}}
\renewcommand{\epsilon}{\varepsilon}
\renewcommand{\leq}{\leqslant}
\renewcommand{\geq}{\geqslant}
\renewcommand{\d}{\mathrm{d} }
\newcommand{\R}{\mathbb{R}}
\newcommand{\C}{\mathbb{C}}
\newcommand{\Z}{\mathbb{Z}}
\newcommand{\N}{\mathbb{N}}
\newcommand{\E}{\mathbb{E}}
\newcommand{\ddt}{\frac{\p}{\p t}}
\newcommand{\ddx}{\frac{\p}{\p x}}
\newcommand{\supp}{\mathrm{supp \,}}
\newcommand{\U}{\mathcal U}
\newcommand{\M}{\mathcal{M}}
\newcommand{\A}{\mathcal A}
\newcommand{\e}{\mathrm e}
\newcommand{\intRpe}{\int_0^\infty}
\newcommand{\Mdot}{{\dot{\M}}}
\newcommand{\Cdot}{{\dot{\mathcal{C}}}}
\newcommand{\xk}{x^{-\frac{2ik\pi}{\log 2}}}
\newcommand{\Rpe}{{(0,\infty)}}
\newcommand{\B}{\mathcal{B}}
\newcommand{\Bloc}{{\B_{loc}}}
\newcommand{\fdt}{\frac{\mathrm{d}}{\mathrm{d}t}}
\newcommand{\f}{\frac}
\newcommand{\1}{\mathds 1}
\newcommand{\Cr}{\mathcal{C}}
\newcommand{\X}{\mathbf X}
\renewcommand{\Re}{\mathrm{Re}}
\renewcommand{\Im}{\mathrm{Im}}
\newtheorem{defi}{Definition}
\newtheorem{theorem}{Theorem}
\newtheorem*{theorem*}{Theorem}
\newtheorem{prop}[theorem]{Proposition}
\newtheorem{lem}[theorem]{Lemma}
\newtheorem{coro}[theorem]{Corollary}
\title{Periodic asymptotic dynamics of the measure solutions to an equal mitosis equation}
\date{}
\author{Pierre Gabriel \thanks{Laboratoire de Math\'ematiques de Versailles, UVSQ, CNRS, Universit\'e Paris-Saclay,  45 Avenue des \'Etats-Unis, 78035 Versailles cedex, France. Email: pierre.gabriel@uvsq.fr}
\and Hugo Martin \thanks{Laboratoire Jacques-Louis Lions, CNRS UMR 7598, Sorbonne universit\'e, 4 place Jussieu, 75005 Paris, France. Corresponding author, Email: hugo.martin@sorbonne-universite.fr}}
\begin{document}

\maketitle

\begin{abstract}
We are interested in a non-local partial differential equation modeling equal mitosis.
We prove that the solutions present persistent asymptotic oscillations and that the convergence to this periodic behavior, in suitable spaces of weighted signed measures, occurs exponentially fast.
It can be seen as a result of spectral gap between the countable set of dominant eigenvalues and the rest of the spectrum, which is to our knowledge completely new.
The two main difficulties in the proof are to define the projection onto the subspace of periodic (rescaled) solutions and to estimate the speed of convergence to this projection.
The first one is addressed by using the generalized relative entropy structure of the dual equation, and the second is tackled by applying Harris's ergodic theorem on sub-problems.
\end{abstract}

\

\noindent{\bf Keywords:} growth-fragmentation  equation,  self-similar  fragmentation,  measure solutions, long-time behavior, general relative entropy, Harris's theorem, periodic semigroups

\

\noindent{\bf MSC 2010:} Primary: 35B10, 35B40, 35Q92, 47D06, 92C37; Secondary: 35B41, 35P05, 92D25

\section{Introdution}

We are interested in the following nonlocal transport equation
\begin{equation}\label{eq:mitosis}
\ddt u(t,x) + \ddx \big(x\,u(t,x)\big) + B(x)u(t,x) = 4B(2x)u(t,2x),\qquad x>0,
\end{equation}
which appears as an idealized size-structured model for the bacterial cell division cycle~\cite{BellAnderson,SinkoStreifer71}.
The unknown $u(t,x)$ represents the population density of cells of size $x$ at time $t,$ which evolves according to two phenomena:
the individual exponential growth which results in the transport term $\partial_x(xu(t,x)),$ and the division into two equal-sized daughter cells with rate $B(x)$ that corresponds to the nonlocal infinitesimal term $2B(2x)u(t,2x)2\d x-B(x)u(t,x)\d x.$

\

Equation~\eqref{eq:mitosis} is an interesting and challenging critical case of the general linear growth-fragmentation equation, which reads
\[\dis \ddt u(t,x) + \ddx \big(g(x)u(t,x)\big) + B(x)u(t,x) = \int_x^\infty B(y)u(t,y)k(y,\d x)\,\d y.\]
The long time behavior of this equation is strongly related to the existence of steady size distributions, namely solutions of the form $\U_0(x)\e^{\lambda_0 t}$ with $\U_0$ nonnegative and integrable.
It is actually equivalent to say that $\U_0$ is a Perron eigenfunction associated to the eigenvalue $\lambda_0$.
Such an eigenpair $(\lambda_0,\U_0)$ typically exists when, roughly speaking, the fragmentation rate $B$ dominates the growth speed $g$ at infinity and on the contrary $g$ dominates $B$ around the origin (see~\cite{DoumicEscobedo,DG,Doumic2018} for more details).
In most cases where this existence holds, the solutions behave asymptotically like the steady size distribution $\U_0(x)\e^{\lambda_0 t}.$
This property, known as asynchronous exponential growth~\cite{WebbGrabosch}, has been proved by many authors using various methods since the pioneering work of Diekmann, Heijmans, and Thieme~\cite{Diekmann1984}.
Most of these results focus on one of the two special cases $g(x)=1$ (linear individual growth) or $g(x)=x$ (exponential individual growth).
When $g(x)=1$ it has been proved for the equal mitosis case $k(y,\d x)=2\delta_{x=\frac y2}$ or more general kernels $k(y,\d x)$ by means of spectral analysis of semigroups~\cite{BernardGabriel2,Diekmann1984,GreinerNagel,Heijmans1984,MischlerScher}, general relative entropy method~\cite{Debiec2018,Michel2005} and/or functional inequalities~\cite{Balague2013,Caceres2011,LaurencotPerthame,Monmarche,Pakdaman2014,PerthameRhyzik}, the theory of stochastic semigroups~\cite{BanasiakPichorRudnicki,Bansaye2019,Bouguet,Broda,CanizoGabrielYoldas,CanizoYoldas,RudnickiPichor}, coupling arguments~\cite{BCGMZ,ChafaiMalrieuParoux,Malrieu}, many-to-one formula~\cite{Cloez}, or explicit expression of the solutions~\cite{Zaidi2015}.
For the case $g(x)=x$ asynchronous exponential growth is proved under the assumption that for all $y>0$ the kernel $k(y,\d x)$ has an absolutely continuous part with respect to the Lebesgue measure:
by means of spectral analysis of semigroups~\cite{BernardGabriel2,Heijmans1984,MischlerScher}, general relative entropy method~\cite{Escobedo2005,Michel2005} and/or functional inequalities~\cite{Balague2013,Caceres2010,Caceres2011,GabrielSalvarani}, Foster-Lyapunov criteria~\cite{Bouguet,CanizoGabrielYoldas}, Feynman-Kac~\cite{Bertoin2019,BertoinWatson2019,BertoinWatson2018,Cavalli} or many-to-one formulas~\cite{Marguet2}.

\

The assumption that the fragmentation kernel has a density part when $g(x)=x$ is not a technical but a necessary restriction.
In the equal mitosis case of Equation~\eqref{eq:mitosis} for instance, asynchronous exponential growth does not hold.
It can be easily understood by noticing that if at time $t=0$ the population is composed of only one individual with deterministic size $x>0,$ then for any positive time $t$ the size of the descendants necessarily belongs to the set $\{x\e^t2^{-n}: n\in\N\}$.
This observation was made already by Bell and Anderson in~\cite{BellAnderson} and it has two important consequences.

First the solution $u(t,x)$ cannot relax to a steady size distribution and it prevents Equation~\eqref{eq:mitosis} from having the asynchronous exponential growth property.
The dynamics does not mix enough the trajectories to generate ergodicity, and the asymptotic behavior keeps a strong memory of the initial data.
This situation has been much less studied than the classical ergodic case.
In~\cite{Diekmann1984,Heijmans1985} Diekmann, Heijmans, and Thieme made the link with the existence of a nontrivial boundary spectrum: all the complex numbers $\lambda_k=1+\frac{2ik\pi}{\log2},$ with $k$ lying in $\Z,$ are eigenvalues.
As a consequence the Perron eigenvalue $\lambda_0=1$ is not strictly dominant and it results in persistent oscillations, generated by the boundary eigenfunctions.
The convergence to this striking behavior was first proved in~\cite{GreinerNagel} in the space $L^1([\alpha,\beta])$ with $[\alpha,\beta]\subset(0,\infty).$
More recently it has been obtained in $L^2((0,\infty),x/\U(x)\,\d x)$~\cite{BDG}, and also in $L^1((0,\infty),x^{1+r}\d x)$ for monomial division rates $B(x)=x^r$ ($r>0$) and smooth initial data~\cite{vanBrunt2018}.

Second, it highlights the lack of regularizing effect of the equation.
If the initial distribution is a Dirac mass, then the solution is a Dirac comb for any time.
It contrasts with the cases of density fragmentation kernels for which the singular part of the measure solutions vanishes asymptotically when times goes to infinity~\cite{Debiec2018},
and gives an additional motivation for studying Equation~\eqref{eq:mitosis} in a space of measures.

In the present paper, we prove the convergence to asymptotic oscillations for the measure solutions of Equation~\eqref{eq:mitosis} in weighted total variation distance.
Besides, we get an exponential rate of convergence, which is a major novelty compared to the previous results in~\cite{BDG,GreinerNagel,vanBrunt2018}.

\

Before stating precisely the main results of the paper, let us start with some definitions.
For a Borel weight function $w:(0,\infty)\to(0,\infty)$, we denote by $\M(w)$ the space of (real-valued) Radon measures $\mu=\mu_+-\mu_-$ on $(0,\infty)$ such that
\[\left\|\mu\right\|_{\M(w)}:=\int_{(0,\infty)}\! w\,\d|\mu|<\infty,\]
where $|\mu|=\mu_++\mu_-$ is the total variation of $\mu$.
We use the shorthand $\dot\M$ for the case $w(x)=x$, which provides a natural space to look for measure solutions to Equation~\eqref{eq:mitosis}.
Indeed a formal integration against the measure $x\,\d x$ over $(0,\infty)$ yields the balance law 
\[\int_0^\infty xu(t,x)\d x = \e^t\int_0^\infty xu(0,x)\d x.\]
We denote by $\B(w)$ the space of Borel functions $f : (0,\infty) \to \R$ such that
\[\left\|f\right\|_{\B(w)} := \sup_{x>0}\,\frac{|f(x)|}{w(x)}<\infty.\]
An element $\mu$ of $\M(w)$ defines a linear form on $\B(w)$ through
\[\langle\mu,f\rangle:=\int_{(0,\infty)}\! f\,\d\mu:=\int_{(0,\infty)}\! f\,\d\mu_+-\int_{(0,\infty)}\! f\,\d\mu_-.\]
We also define the subset $\Cr(w)\subset\B(w)$ of continuous functions, and the subset $\Cr_0(w)\subset \Cr(w)$ of the functions such that the ratio $f(x)/w(x)$ vanishes at zero and infinity.
The weighted total variation norm is also given by the dual representation
\[\|\mu\|_{\M(w)}=\sup_{\|f\|_{\B(w)}\leq1}\langle\mu,f\rangle\]
where the supremum can be indifferently taken over $\B(w)$, $\Cr_0(w)$ or $\Cr_c(0,\infty)$.
As for $\dot\M$, we denote by $\dot\B,\Cdot,\Cdot_0$ the spaces $\B(w),\Cr(w),\Cr_0(w)$ corresponding to the choice $w(x)=x$.

\

Now we state the notion of solutions that we will use for Equation~\eqref{eq:mitosis} in the space~$\dot\M$.
First we define the operator $\A$ acting on the space $\Cr^1(0,\infty)$ of continuously differentiable functions via
\[\A f(x):=xf'(x)+B(x)\big(2f(x/2)-f(x)\big)\]
and its domain
\[D(\A):=\big\{f\in \dot\B\cap\Cr^1(0,\infty):\A f\in\dot\B\big\}.\]
The definition we choose for the measure solutions to Equation~\eqref{eq:mitosis} is of the ``mild'' type in the sense that it relies on an integration in time, and of the ``weak'' type in the sense that it involves test functions in space.
\begin{defi}\label{def:measure sol}
A family $(\mu_t)_{t\geq 0}\subset \Mdot$ is called a measure solution to Equation~\eqref{eq:mitosis} if for all $f\in \Cdot$ the mapping $t \mapsto \langle\mu_t,f\rangle$ is continuous, and for all $t\geq 0$ and all $f\in D(\A)$
\begin{equation}\label{eq:mitosismeasure}
\langle\mu_t,f\rangle=\langle\mu_0,f\rangle+\int_0^t\langle\mu_s,\A f\rangle\,\d s.
\end{equation}
\end{defi}
In order to prove the well-posedness of Equation~\eqref{eq:mitosis} in the sense of this definition, we will make the following assumption on the division rate
\begin{equation}\label{hyp:Bsol}
B:(0,\infty)\to[0,\infty)\ \text{ is continuous and bounded around }0.
\end{equation}

\

Let us now look at the (direct and dual) eigenvalue problem associated to Equation~\eqref{eq:mitosis}.
It consists in finding $\lambda\in\C$ together with nonzero $\U$ and $\phi$ such that
\begin{align}
&\big(x\,\U(x)\big)' + \left(B(x)+\lb\right) \U(x) = 4B(2x)\U(2x),\label{Perron}
\vspace{4mm}\\
& -x\phi'(x) + \left(B(x)+\lb\right) \phi(x) = 2B(x)\phi\Big(\f x2\Big).\label{Perronadj}
\end{align}
Requiring that $\U$ and $\phi$ are non-negative corresponds to the Perron eigenvalue problem, and it has been solved under various assumptions on the division rate $B$ in~\cite{DG,Escobedo2005,Hall1990,Michel2006}.
The most general result is the one obtained as a particular case of~\cite[Theorem~1]{DG}, which guarantees that if $B$ satisfies~\eqref{hyp:Bsol} and the additional assumption
\begin{equation}\label{hyp:B}
\left\{\begin{array}{l}
\supp B=[b,+\infty) \text{ for some } b\geq0,
\vspace{2mm}\\
 \exists b_0,\gamma_0,K_0>0,\ \forall x<b_0,\quad B(x)\leq K_0 x^{\gamma_0}
 \vspace{2mm}\\
\exists b_1,\gamma_1,\gamma_2,K_1,K_2>0\ \forall x>b_1\quad K_1x^{\gamma_1} \leq B(x) \leq K_2x^{\gamma_2},
\end{array}\right.
\end{equation}
then there exists a unique nonnegative eigenfunction $\U_0\in L^1(0,\infty)$ solution to~\eqref{Perron} and normalized by $\int_0^\infty x\,\U_0(x)\d x=1.$
It is associated to the eigenvalue $\lambda_0=1$ and to the dual eigenfunction $\phi_0(x)=x$ solution to~\eqref{Perronadj}.
Moreover,
\[\forall r\in \R, \qquad x^r\,\U_0\in L^1\Rpe \cap L^\infty\Rpe.\]
As already noticed in~\cite{Diekmann1984} (see also example 2.15, p.354 in~\cite{Nagel86}), the Perron eigenvalue $\lambda_0$ is not strictly dominant in the present case.
There is an infinite number of eigenvalues with real part equal to $1$.
More precisely for all $k\in\Z$ the triplet $(\lambda_k,\U_k,\phi_k)$ defined from $(\lambda_0,\U_0,\phi_0)$ by 
\[\lb_k = 1+\frac{2ik\pi}{\log 2}, \qquad \U_k(x) = \xk \U_0(x), \qquad \phi_k(x) = x^{1+\frac{2ik\pi}{\log 2}},\]
verifies~\eqref{Perron}-\eqref{Perronadj}.
In such a situation the asynchronous exponential growth property cannot hold, since for any $k\in\Z\setminus\{0\}$ the functions $\Re\big(\U_k(x)\e^{\lambda_kt}\big)$ and $\Im\big(\U_k(x)\e^{\lambda_kt}\big)$
are solutions to Equation~\eqref{eq:mitosis} that oscillate around $\U_0(x)\e^t$.
Since we work in $\dot\M$, it will be useful to define the complex-valued Radon measures
\[\nu_k(\d x):=\U_k(x)\d x.\]
The main results of the paper are summarized in the following theorem.

\begin{theorem}\label{thm:main}
Let $\mu_0\in\dot\M$.
If Assumption~\eqref{hyp:Bsol} is verified, then there exists a unique measure solution $(\mu_t)_{t\geq0}$ to Equation~\eqref{eq:mitosis} in the sense of Definition~\ref{def:measure sol}.
If additionally $B$ satisfies~\eqref{hyp:B}, then there exists a unique $\log 2$-periodic family $(\rho_t)_{t\geq0}\subset \Mdot$ such that for all $f\in \Cdot_0$
\[\langle\e^{-t}\mu_t,f\rangle-\langle\rho_t,f\rangle\xrightarrow[t\to+\infty]{}0.\]
Moreover, for any $t\geq0$, the Radon measure $\rho_t$ is characterized through a Fejér type sum:
for all $f\in \Cr^1_c(0,\infty)$
\[\langle\rho_t,f\rangle=\lim_{N\to\infty}\sum_{k=-N}^{N}\Big(1-\frac{|k|}{N}\Big)\langle\mu_0,\phi_k\rangle\langle\nu_k,f\rangle\e^{\frac{2ik\pi}{\log2}t}.\]
Finally, consider two real numbers $r_1$ and $r_2$ with $r_1<1<r_2$ and define the weight $w(x)=x^{r_1}+x^{r_2}$.
If $\mu_0$ belongs to $\M(w)$ then so does $\rho_0$ and there exist computable constants $C\geq1$ and $a>0$, that depend only on $r_1,r_2$ and $B$, such that for all $t\geq0$
\[\left\|\e^{-t}\mu_t-\rho_t\right\|_{\M(w)}\leq C\e^{-at}\left\|\mu_0-\rho_0\right\|_{\M(w)}.\]
\end{theorem}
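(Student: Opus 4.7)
The plan is to construct a solution to the \emph{dual} equation \eqref{eq:dual} first and then define measure solutions by duality. Along the characteristic $s\mapsto xe^{s-t}$, equation \eqref{eq:dual} can be put in Duhamel form
\[M_tf(x)=\e^{-\int_0^tB(xe^{\sigma-t})\d\sigma}f(xe^{-t})+2\int_0^t\e^{-\int_s^tB(xe^{\sigma-t})\d\sigma}B(xe^{s-t})M_sf\Big(\tfrac12 xe^{s-t}\Big)\d s,\]
which I would solve by Picard iteration on $\dot\B$, using the boundedness of $B$ near $0$ from \eqref{hyp:Bsol} and the control of the daughter argument $\tfrac12xe^{s-t}$ for $s\in[0,t]$. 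The eigenidentity $M_t\phi=\e^t\phi$ with $\phi(x)=x$ gives the sharp bound $\|M_tf\|_{\dot\B}\leq\e^t\|f\|_{\dot\B}$. Setting $\mu_t(f):=\mu_0(M_tf)$ then yields a measure solution in the sense of Definition~\ref{def:measure sol}, with~\eqref{eq:mitosismeasure} coming from $\frac{\d}{\d t}M_tf=\A M_tf$ on $D(\A)$. Uniqueness follows from testing any solution against the time-dependent test function $s\mapsto M_{T-s}g$ for $g\in D(\A)$ and using $\partial_s(M_{T-s}g)=-\A M_{T-s}g$.

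\textbf{Periodic limit and Fejér formula.} The key observation is that the rescaling $\psi(t,x):=\e^{-t}\varphi(t,x)/x$ transforms the dual equation into the conservative form $\partial_t\psi=x\partial_x\psi+B(x)[\psi(t,x/2)-\psi(t,x)]$, whose invariant measure is $x\,\U(x)\d x$. The dual General Relative Entropy gives that $t\mapsto\int H(\psi(t,x))\,x\U(x)\d x$ is non-increasing for any convex $H$; a LaSalle-type argument identifies the asymptotic set of $\psi$ with functions of the phase $y=t+\log x\mod\log 2$, which is the only quantity preserved by both the transport $x\mapsto xe^s$ and the halving $x\mapsto x/2$. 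Hence for each $f\in\Cdot_0$ I extract a $\log 2$-periodic function $\Psi_f$ such that $\psi(t,\cdot)-\Psi_f(t+\log\cdot)\to 0$ and set $\rho_t(f):=\int_0^\infty x\,\Psi_f(t+\log x)\,\d\mu_0(x)$. Expanding $\Psi_f(y)=\sum_k c_k(f)\e^{2ik\pi y/\log 2}$ in its Fourier series and using $x\cdot x^{2ik\pi/\log 2}=\phi_k(x)$ gives $\rho_t(f)=\sum_k c_k(f)\mu_0(\phi_k)\e^{2ik\pi t/\log 2}$. The coefficients $c_k(f)$ are identified as $\nu_k(f)$ via the explicit expression of the phase-projection against the reference measure $\U\,\d x$, consistent with $M_t\phi_k=\e^{\lambda_kt}\phi_k$. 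For $f\in\Cr^1_c(0,\infty)$ the Fourier series need not converge absolutely, which is precisely why one sums it in the Cesàro sense, giving the Fejér weights $(1-|k|/N)$ in the statement.

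\textbf{Exponential rate via Harris.} To upgrade the weak-$*$ convergence to exponential convergence in $\|\cdot\|_{\M(w)}$, the plan is to apply Harris's ergodic theorem to the centered dynamics $t\mapsto \e^{-t}\mu_t-\rho_t$, which lies in the subspace annihilated by every boundary eigenform $\phi_k$. I would look for a Lyapunov function $V(x)=x^{r_1}+x^{r_2}$ satisfying a dual drift condition $\A V-V\leq -aV+K\1_C$ on some $C\ssubset(0,\infty)$: the sub-linear upper bound on $B$ near $0$ and its super-linear lower bound at infinity from \eqref{hyp:B} should make each power weight contract. Combined with a Doeblin-type minorization on $C$ for the centered sub-problem, Harris's theorem yields a geometric contraction rate $a$ that is explicit in terms of the Lyapunov and minorization constants. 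That $\rho_0\in\M(w)$ whenever $\mu_0\in\M(w)$ follows from the boundedness of the phase-projection on $\M(w)$, controlled by the very same Lyapunov function.

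\textbf{Main obstacle.} I expect the hardest step to be the Doeblin minorization for the centered sub-problem. Mitosis being purely deterministic, any measure initially supported on a geometric lattice $\{x_02^{-k}\e^t:k\in\N\}$ stays on such a lattice, which obstructs any naive minorization of the full semigroup. Once the lattice-invariant phase is removed by the Fejér projection, however, the remaining dynamics genuinely mixes phases, and the Poissonian randomness of the division clock should provide the continuity required for a minorization on bounded subsets. Making this precise — in particular, transferring the minorization from the phase quotient to the weighted total variation norm and quantitatively bounding $\|\rho_0\|_{\M(w)}$ in terms of $\|\mu_0\|_{\M(w)}$ — is where the dual GRE structure developed in the second step will be crucial.
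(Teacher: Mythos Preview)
Your well-posedness outline and the dual-GRE/Fej\'er construction of the periodic limit are essentially the paper's approach. One slip: the characteristics of $\partial_t\varphi=x\partial_x\varphi$ send $x$ forward to $xe^t$, not back to $xe^{-t}$, so the free term in your Duhamel formula should read $f(xe^t)\e^{-\int_0^tB(xe^s)\d s}$; this is what lets the fixed-point argument run in $\Bloc$ with no upper bound on $B$ at infinity.

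The genuine gap is in the Harris step. You plan to apply Harris to the \emph{centered} dynamics $\e^{-t}\mu_t-\rho_t$, reasoning that once the lattice-invariant phase is projected out the remainder mixes. But projecting destroys positivity: the centered evolution is not a Markov semigroup, and Harris's theorem --- a positivity argument through and through --- gives no Doeblin minorization for a signed operator. The paper's resolution is the opposite of yours: rather than quotienting out the lattice, it \emph{restricts} to it. For each fixed $x>0$, the conservative semigroup $P_tf:=\e^{-t}M_t(\phi f)/\phi$, sampled at the period, leaves the discrete state space $\X_x:=\{2^mx:m\in\Z\}$ invariant. On $\X_x$ the chain $P:=P_{\log 2}$ is a genuine positive Markov operator; your Lyapunov function $V(y)=y^{q_1}+y^{q_2}$ (with $q_i=r_i-1$) works as stated, and the minorization is obtained by iterating the Duhamel formula finitely many times to push every point of $\{V\leq R\}\cap\X_x$ onto a single reference Dirac mass, with constants that do not depend on $x$. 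The unique invariant measure on each $\X_x$ is then identified with $R_0$ via the weak-$*$ limit already established, and reassembling the uniform geometric contraction over all $x$ yields the $\B(w)$ estimate. The idea you are missing is that the periodicity and the lattice are not obstacles to be removed before Harris --- sampling at multiples of $\log 2$ turns them into exactly the structure that makes Harris applicable.
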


\

Let us make some comments about these results:
\begin{enumerate}[label=(\roman*)]
\item It is worth noticing that the well-posedness of Equation~\eqref{eq:mitosis} does not require any upper bound for the division rate.
It contrasts with existing results in Lebesgue spaces where at most polynomial growth is usually assumed.
\item In~\cite{BDG,GreinerNagel,vanBrunt2018} the convergence to the oscillating behavior is proved to occur in norm but without any estimate on the speed. 
Here we extend the convergence to measure solutions and provide for the first time an explicit rate of decay in suitable weighted total variation norms.
\item In~\cite{BDG}, the dynamic equilibrium is characterized as a Fourier type series.
In our result it is replaced by a Fejér sum, namely the Cesàro means of the Fourier series.
\item Even though all the $\nu_k$ have a density with respect to the Lebesgue measure, the limit $\rho_t$ does not in general.
Indeed, as already noticed, if for instance $\mu_0=\delta_x$ then $\supp\mu_t\subset\{x\e^t2^{-n}: n\in\N\}$.
Consequently $\supp\rho_t\subset\{x\e^t2^{-n}: n\in\Z\}$ and $\rho_t$ is thus a Dirac comb.
\item We easily notice in the explicit formula of $\rho_t$ that if $\mu_0$ is such that $\langle\mu_0,\phi_k\rangle=0$ for all $k\neq0$,
then there is no oscillations and the solution behaves asymptotically like $\U_0(x)\e^t$, similarly to the asynchronous exponential growth case.
Such initial distributions actually do exist, as for instance the one proposed in~\cite{vanBrunt2018} which reads in our setting
\[\mu_0(\d x)=\frac{1}{x^2}\1_{[1,2]}(x)\,\d x\]
where $\1_{[1,2]}$ denotes the indicator function of the interval $[1,2].$
\end{enumerate}

\

The proof of the results given in Theorem~\ref{thm:main} is split into the two next sections:
Section \ref{sec:wellposedness} is about the well-posedness of Equation~\eqref{eq:mitosis} in the framework of measure solutions, and
Section \ref{sec:asymp} is devoted to the analysis of the long time asymptotic behavior.
In a concluding section, we draw some future directions that can extend the present work.

\section{Well-posedness in the measure setting} \label{sec:wellposedness}

Measure solutions to structured populations dynamics PDEs have attracted increasing attention in the last few years, and there exist several general well-posedness results~\cite{CanizoCarrilloCuadrado,Carrillo2012,EversHilleMuntean15,EversHilleMuntean16,Gwiazda2010}.
However they do not apply here due to the unboundedness of the function $B$.
We overcome this difficulty by adopting a duality approach in the spirit of~\cite{Bansaye2017,Bansaye2019,Dumont2017,Gabriel2018},
which also proves useful for investigating the long time behavior.

\

Additional motivation to consider measure solutions lies in the study of stochastic processes.
Equation~\eqref{eq:mitosis} is the Kolmogorov (forward) equation of the underlying piecewise deterministic branching process.
Let us give here a brief informal description of this, and refer to~\cite{Campillo,Cloez,DerfelVanBruntWake,DHKR,Marguet1} for more details.
Take the measure-valued branching process $(Z_t)_{t\geq0}$ defined as the empirical measure
\[Z_t=\sum_{i\in V_t}\delta_{X^i_t}\]
where $V_t$ is the set of individuals alive at time $t$ and $\{X^i_t : i\in V_t\}$ the set of their sizes.
For each individual $i\in V_t$ the size $X^i_t$ grows exponentially fast following the deterministic flow $\frac{\d}{\d t}X^i_t=X^i_t$ until a division time $T_i$ which occurs stochastically with rate $B(X^i_t).$
Then the individual $i$ dies and gives birth to two daughter cells $i_1$ and $i_2$ with size $X^{i_1}_{T_i}=X^{i_2}_{T_i}=\frac12 X^i_{T_i}.$
Taking the expectancy of the random measures $Z_t,$ we get a family of measures $u(t,\cdot)$ defined for any Borel set $A\subset(0,\infty)$ by
\[u(t,A):=\E[Z_t(A)]=\E\big[\#\{i\in V_t:X^i_t\in A\}\big],\]
which is a weak solution to the Kolmogorov Equation~\eqref{eq:mitosis}, see {\it e.g.} \cite[Theorem~1]{DHKR} or~\cite[Corollary~5.1]{Marguet1}.

Another Kolmogorov equation is classically associated to $(Z_t)_{t\geq0},$ which is the dual equation of~\eqref{eq:mitosis}
\begin{equation}\label{eq:dual}
\ddt \varphi(t,x) = x \ddx \varphi(t,x) + B(x)\big[2\varphi(t,x/2)-\varphi(t,x)\big],\qquad x>0.
\end{equation}
This second equation is sometimes written in its backward version where $\ddt \varphi(t,x)$ is replaced by $-\ddt \varphi(t,x),$ and is then usually called the Kolmogorov backward equation.
Nevertheless since the division rate $B(x)$ does not depend on time, we prefer here writing this backward equation in a forward form so that for any observation function $f$
\[\varphi(t,x):=\E[\langle Z_t,f\rangle\,|\,Z_0=\delta_x]=\E\bigg[\sum_{i\in V_t}f(X^i_t)\,\big|\,Z_0=\delta_x\bigg]\]
is the solution to~\eqref{eq:dual} with initial data $\varphi(0,x)=f(x)$, see {\it e.g.} \cite[Proposition~4]{Campillo} or~\cite[Corollary~2.4]{Cloez}.

Equation~\eqref{eq:mitosis} is then naturally defined on a space of measure, while Equation~\eqref{eq:dual} is defined on a space of functions.

\

To prove the well-posedness of Equation~\eqref{eq:mitosis} in $\dot\M$, we proceed by duality and first build a semigroup on~$\dot\B$ that yields the unique solutions to Equation~\eqref{eq:dual}.

\subsection{The dual equation}\label{subsec:adj}

We start by proving the well-posedness of Equation~\eqref{eq:dual} in a space larger than~$\dot\B$, which turns out to be more suitable.
For a subset $\Omega \subset \R^d$, we denote by $\Bloc(\Omega)$ the space of functions $f : \Omega \to \R$ that are bounded on $\Omega\cap B(0,r)$ for any $r>0$, and by $\B(\Omega)$ the (Banach) subspace of bounded functions endowed with the supremum norm $\|f\|_\infty=\sup_{x\in\Omega}|f(x)|$.
Using these spaces allows us to prove the well-posedness without needing any upper bound at infinity on the division rate~$B$.

In the following proposition, we prove that for any $f\in\Bloc(0,\infty)$ there exists a unique solution $\varphi\in\Bloc([0,\infty)\times(0,\infty))$ to Equation~\eqref{eq:dual} in a mild sense (Duhamel formula) with initial condition $\varphi(0,\cdot)=f$.
Moreover, we show that if $f\in \Cr^1(0,\infty)$ then $\varphi$ is also continuously differentiable and verifies Equation~\eqref{eq:dual} in the classical sense.

\begin{prop}\label{lm:fixedpoint}
Assume that $B$ satisfies~\eqref{hyp:Bsol}.
Then for any $f\in\B_{loc}(0,\infty)$ there exists a unique $\varphi\in\B_{loc}([0,\infty)\times(0,\infty))$ such that for all $t\geq0$ and $x>0$
\[\varphi(t,x) = f(x\e^t)\e^{-\int_0^t B(x\e^s)\d s} + 2\int_0^t B(x\e^\tau)\e^{-\int_0^\tau B(x\e^s)\d s}\varphi\Big(t-\tau,\frac{x\e^\tau}{2}\Big)\d \tau.\]
Moreover if $f$ is nonnegative/continuous/continuously differentiable, then so is~$\varphi.$
In the latter case $\varphi$ verifies for all $t,x>0$
\[\ddt \varphi(t,x)= \A\varphi(t,\cdot)(x)=x \ddx \varphi(t,x) + B(x)\big(2\varphi(t,x/2)-\varphi(t,x)\big).\]
\end{prop}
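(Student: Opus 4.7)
The approach is to recast the Duhamel identity as a fixed-point problem for the operator
\[(\Gamma\psi)(t,x) := f(x\e^t)\e^{-\int_0^t B(x\e^s)\d s} + 2\int_0^t B(x\e^\tau)\e^{-\int_0^\tau B(x\e^s)\d s}\psi\bigl(t-\tau,\tfrac{x\e^\tau}{2}\bigr)\d\tau,\]
to solve it locally via Banach's theorem, and then to patch the local solutions. The key structural observation is that when $T_0\leq\log 2$ and $x\in(0,R]$, the rescaled argument $x\e^\tau/2$ remains in $(0,R]$ for every $\tau\in[0,T_0]$; combined with the boundedness of $B$ and $f$ on any bounded subset of $(0,\infty)$ (from Assumption~\eqref{hyp:Bsol} and the local-boundedness definition of $\Bloc$), this makes $\Gamma$ a well-defined self-map of the Banach space $\B([0,T_0]\times(0,R])$. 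The crude bound
\[\bigl|\Gamma\psi_1-\Gamma\psi_2\bigr|(t,x)\leq 2T_0\,\|B\|_{L^\infty((0,R])}\,\|\psi_1-\psi_2\|_\infty\]
yields a strict contraction once $T_0$ is chosen small enough, producing a unique local solution.

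To reach an arbitrary target $[0,T^\star]\times(0,R^\star]$, I would set $R:=R^\star\e^{T^\star}$ and iterate in slabs $[kT_0,(k+1)T_0]$, on each slab solving the analogous Duhamel equation with initial datum $\varphi(kT_0,\cdot)$. The admissible spatial radius shrinks by the factor $\e^{-T_0}$ per slab, but after $N=\lceil T^\star/T_0\rceil$ slabs it still exceeds $R^\star$; local uniqueness forces the slabs to agree on their overlaps and also forces any two global $\Bloc$-solutions to coincide, yielding existence and uniqueness of $\varphi\in\Bloc([0,\infty)\times(0,\infty))$. Nonnegativity and continuity pass from $f$ to $\varphi$ since $\Gamma$ preserves both properties and the Picard iterates converge uniformly on compacts.

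For $f\in\Cr^1$, a naive differentiation of $\Gamma$ would demand $\partial_x B(x\e^\tau)$, which Assumption~\eqref{hyp:Bsol} does not supply. The remedy is the substitution $y=x\e^\tau$, which rewrites the integral as
\[2\int_x^{x\e^t}\f{B(y)}{y}\,\e^{-\int_x^y B(z)/z\,\d z}\,\varphi\bigl(t-\log\tfrac{y}{x},\tfrac{y}{2}\bigr)\d y,\]
in which $B$ is never differentiated: the $x$-dependence sits only in the two endpoints, in the argument $\log(y/x)$, and in the lower limit of the inner integral, whose $x$-derivative $-B(x)/x$ is continuous. A Picard iteration in the $\Cr^1$-norm therefore converges, and by uniqueness its limit coincides with $\varphi$. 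Once $\varphi\in\Cr^1$ is secured, applying $\partial_t-x\partial_x$ to both sides of the Duhamel identity in this form collapses the characteristic-type contributions: the upper-endpoint terms cancel between $\partial_t$ and $x\partial_x$, the lower-endpoint term at $y=x$ in $x\partial_x$ produces the source $2B(x)\varphi(t,x/2)$, and the $x$-derivative of the exponential factor produces the sink $-B(x)\varphi(t,x)$, which combine to give $\partial_t\varphi=\A\varphi$. The main obstacle throughout is the bookkeeping of the time iteration that balances the initial radius, the target time, and the exponential contraction of the spatial domain; the $\Cr^1$ regularity is a close second, rescued only by the change of variable that removes the differentiation of $B$.
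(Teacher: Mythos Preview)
Your proof follows essentially the paper's strategy: Banach fixed point for $\Gamma$ on bounded space--time domains, then time iteration; nonnegativity and continuity via invariance of closed subsets under $\Gamma$; $\Cr^1$ regularity by rewriting the integral through $y=x\e^\tau$ (so $B$ is never differentiated) and repeating the contraction argument in a $\Cr^1$-type norm; and finally the identity $\partial_t(\Gamma g)-x\partial_x(\Gamma g)=B(x)\big(2g(t,x/2)-\Gamma g(t,x)\big)$ applied at the fixed point to obtain $\partial_t\varphi=\A\varphi$. The only structural difference is the choice of domain: the paper works on the characteristic sets $\Omega_{T,K}=\{(t,x):x\e^t<K\}$, which are automatically stable under $(t,x)\mapsto(t-\tau,x\e^\tau/2)$, whereas you use rectangles $[0,T_0]\times(0,R]$ and enforce stability through the extra constraint $T_0\leq\log 2$; both choices lead to the same argument.

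Two minor bookkeeping slips to correct. First, in your contraction estimate the supremum of $B$ must be taken over $(0,R\e^{T_0}]$, not $(0,R]$, since the argument $x\e^\tau$ ranges up to $R\e^{T_0}$. Second, with $R=R^\star\e^{T^\star}$ and $N=\lceil T^\star/T_0\rceil$ full slabs the final admissible radius is $R\e^{-NT_0}=R^\star\e^{T^\star-NT_0}\leq R^\star$, because the ceiling overshoots; either make the last slab of length $T^\star-(N-1)T_0$, or start from $R=R^\star\e^{T^\star+\log 2}$ so that $R\e^{-NT_0}\geq R^\star\e^{\log 2-T_0}\geq R^\star$.
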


\medskip

\begin{proof}
Let $f\in\B_{loc}(0,\infty)$ and define on $\B_{loc}([0,\infty)\times(0,\infty))$ the mapping $\Gamma$ by
\[\Gamma g(t,x)=f(x\e^t)\e^{-\int_0^t B(x\e^s)\,\d s} + 2\int_0^t B(x\e^\tau)\e^{-\int_0^\tau B(x\e^s)\,\d s} g\Big(t-\tau,\frac{x\e^\tau}{2}\Big)\,\d\tau.\]
For $T,K>0$ define the set $\Omega_{T,K}=\{(t,x)\in[0,T]\times(0,\infty),\ x\e^t<K\}.$
Clearly $\Gamma$ induces a mapping $\B(\Omega_{T,K})\to\B(\Omega_{T,K}),$ still denoted by $\Gamma.$
To build a fixed point of $\Gamma$ in $\B_{loc}([0,\infty)\times(0,\infty))$ we prove that it admits a unique fixed point in any $\B(\Omega_{T,K})$, denoted $\varphi_{T,K}$, that we will build piecewisely on subsets of $\Omega_{T,K}$.

\medskip

Let $K>0$ and $t_0<1/(2\sup_{(0,K)}B).$
For any $g_1,g_2\in\B(\Omega_{t_0,K})$ we have
\[\left\|\Gamma g_1-\Gamma g_2\right\|_\infty\leq2t_0 \sup_{(0,K)}\!B\,\left\|g_1-g_2\right\|_\infty\]
and $\Gamma$ is a contraction.
The Banach fixed point theorem then guarantees the existence of a unique fixed point $\varphi^0=\varphi_{t_0,K}$ of $\Gamma$ in $\B(\Omega_{t_0,K}).$
To extend this construction up to $T$, we cover $\Omega_{T,K}$ with strips of width $t_0$:
\[
\omega_k := \left\{(t,x)\in [kt_0,(k+1)t_0]\times (0,\infty),\ x\e^t < K\right\}
\]
where $k\in\N$.
We start by setting ${\varphi_{T,K}}_{|\omega_0} := \varphi^0$.
Then, applying the fixed point technique in $\B(\Omega_{t_0,K\e^{-t_0}})$ with $\varphi^0(t_0,\cdot)$ instead of $f$ yields a function~$\varphi^1$.
Defining ${\varphi_{T,K}}_{|\omega_1} := \varphi^1(\cdot + t_0,\cdot)$, we get a unique fixed point of $\Gamma$ on $\omega_0 \cup \omega_1$ and, repeating the argument, a unique fixed point of $\Gamma$ on $\Omega_{T,K}$.

\medskip

For $T'>T>0$ and $K'>K>0$ we have ${\varphi_{T',K'}}_{|\Omega_{T,K}}=\varphi_{T,K}$ by uniqueness of the fixed point in $\B(\Omega_{T,K}),$
and we can define $\varphi$ by setting $\varphi_{|\Omega_{T,K}}=\varphi_{T,K}$ for any $T,K>0.$
Clearly the function $\varphi$ thus defined is the unique fixed point of $\Gamma$ in $\B_{loc}([0,\infty)\times(0,\infty)).$

\medskip

Since $\Gamma$ preserves the closed cone of nonnegative functions if $f$ is nonnegative, the fixed point $\varphi_{t_0,K}$ is necessarily nonnegative when $f$ is so.
Then by iteration $\varphi_{T,K}\geq0$ for any $T,K>0,$ and ultimately $\varphi\geq0$.
Similarly, the closed subspace of continuous functions being invariant under $\Gamma$ when $f$ is continuous, the fixed point $\varphi$ inherits the continuity of $f$.

\medskip

Consider now that $f$ is continuously differentiable on $(0,\infty).$
Unlike the sets of nonnegative or continuous functions, the subspace $\Cr^1(\Omega_{t_0,K})$ is not closed in $\B(\Omega_{t_0,K})$ for the norm $\|\cdot\|_\infty.$
For proving the continuous differentiability of $\varphi$ we repeat the fixed point argument in the Banach spaces
\[\{g\in \Cr^1(\Omega_{T,K}),\ g(0,\cdot)=f\}\]
endowed with the norm
\[\left\|g\right\|_{\Cr^1}=\left\|g\right\|_\infty+\left\|\partial_tg\right\|_\infty+\left\|x\partial_xg\right\|_\infty.\]
Differentiating $\Gamma g$ with respect to $t$ we get
\begin{align}
\p_t(\Gamma g)(t,x)&=\left[x\e^tf'(x\e^t) - B(x\e^t)f(x\e^t)\right]\e^{-\int_0^t B(x\e^s)\d s} \nonumber\\
&\hspace{16mm}+ 2B(x\e^t)\e^{-\int_0^t B(x\e^s)\d s}g\Big(0,\frac{x\e^t}{2}\Big) \nonumber\\
&\hspace{24mm} + 2\int_0^t B(x\e^\tau)\e^{-\int_0^\tau B(x\e^s)\d s} \p_t g\Big(t-\tau,\frac{x\e^\tau}{2}\Big)\d \tau \nonumber\\
	&\hspace{-10mm}=\A f(x\e^t)\e^{-\int_0^t B(x\e^s)\d s} + 2\int_0^t B(x\e^\tau)\e^{-\int_0^\tau B(x\e^s)\d s} \p_t g\Big(t-\tau,\frac{x\e^\tau}{2}\Big)\d \tau \label{eq:dtGammag}
\end{align}
and differentiating the alternative formulation
\[\Gamma g(t,x)=f(x\e^t)\e^{-\int_x^{x\e^t} B(z)\frac{\d z}{z}} + 2\int_x^{x\e^t} B(y)\e^{-\int_x^y B(z)\f{\d z}{z}}g\Big(t-\log\big(\frac{y}{x}\big),\frac{y}{2}\Big) \f{\d y}{y}\]
with respect to $x$ we obtain
\begin{align*}
x\partial_x(\Gamma g)(t,x)&=\Big[\A f(x\e^t)+B(x)f(x\e^t)\Big]\e^{-\int_x^{x\e^t} B(z)\frac{\d z}{z}}-2B(x)g\Big(t,\frac x2\Big)\\
&\qquad+2B(x)\int_x^{x\e^t} B(y)\e^{-\int_x^y B(z)\f{\d z}{z}}g\Big(t-\log\big(\frac{y}{x}\big),\frac{y}{2}\Big) \f{\d y}{y}\\
&\qquad\qquad+2\int_x^{x\e^t} B(y)\e^{-\int_x^y B(z)\f{\d z}{z}}\partial_tg\Big(t-\log\big(\frac{y}{x}\big),\frac{y}{2}\Big) \f{\d y}{y}\\
&=\Big[\A f(x\e^t)+B(x)f(x\e^t)-2B(x)f\Big(\frac x2\Big)\Big]\e^{-\int_0^t B(x\e^s)\,\d s}\\
&\qquad+2\int_0^{t} \big(B(x\e^\tau)-\e^\tau B(x)\big)\e^{-\int_0^\tau B(x\e^s)\d s}\partial_tg\Big(t-\tau,\frac{x\e^\tau}{2}\Big) \d\tau\\
&\quad\qquad+B(x)\int_0^{t} \e^{-\int_0^\tau B(x\e^s)\d s}\partial_xg\Big(t-\tau,\frac{x\e^\tau}{2}\Big) \d\tau.
\end{align*}
On the one hand, using the second expression of $x\partial_x(\Gamma g)(t,x)$ above we deduce that for $g_1,g_2\in \Cr^1(\Omega_{t_0,K})$ such that $g_1(0,\cdot)=g_2(0,\cdot)=f$ we have
\begin{align*}
&\left\|\Gamma g_1-\Gamma g_2\right\|_{\Cr^1}\\
&\leq t_0 \sup_{(0,K)}\!B\,\Big(2\left\|g_1-g_2\right\|_\infty+2(2+\e^{t_0})\left\|\partial_tg_1-\partial_tg_2\right\|_\infty+\left\|\partial_xg_1-\partial_xg_2\right\|_\infty\Big)\\
&\leq 2t_0(2+\e^{t_0})\sup_{(0,K)}\!B\,\left\|g_1-g_2\right\|_{\Cr^1}.
\end{align*}
Thus $\Gamma$ is a contraction for $t_0$ small enough and this guarantees that the fixed point $\varphi$ necessarily belongs to $\Cr^1([0,\infty)\times(0,\infty)).$
On the other hand, using the first expression of $x\partial_x(\Gamma g)(t,x)$ we have
\[\partial_t(\Gamma g)(t,x)-x\partial_x(\Gamma g)(t,x)=B(x)\bigg(2g\Big(t,\frac x2\Big)-\Gamma g(t,x)\bigg)\]
and accordingly the fixed point satisfies $\partial_t\varphi=\A\varphi$.
\end{proof}

From now on, we assume that the division rate $B$ satisfies~\eqref{hyp:Bsol}.
From Proposition~\ref{lm:fixedpoint} we deduce that Equation~\eqref{eq:dual} generates a positive semigroup on $\dot\B$ by setting for any $t\geq0$ and $f\in\B_{loc}(0,\infty)$
\[M_tf:=\varphi(t,\cdot).\]

\begin{coro}\label{cor:Mtdroite}
The family $(M_t)_{t\geq0}$ defines a semigroup of positive operators on~$\Bloc(0,\infty).$
If $f\in D(\A)$ then the function $(t,x)\mapsto M_tf(x)$ is continuously differentiable on $(0,\infty) \times (0,\infty)$ and satisfies \[\p_tM_tf(x) = \A M_tf(x) = M_t\A f(x).\]
Moreover the subspaces $\dot\B$ and $\Cdot$ are invariant under $M_t$, and for any $f\in\dot\B$ and any $t\geq0$
\[\left\|M_tf\right\|_{\dot\B}\leq\e^t\left\|f\right\|_{\dot\B}.\]
\end{coro}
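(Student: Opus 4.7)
My plan is to reduce each assertion of the corollary to the uniqueness of fixed points established in Proposition~\ref{lm:fixedpoint}.

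For the semigroup identity, I would first observe that $M_0 f = f$ is read directly off the Duhamel formula at $t=0$. To obtain $M_{s+t} = M_t M_s$ for $s,t\geq 0$, I would compare $v(t,x):=\varphi(s+t,x)$ with $w(t,x):=(M_t M_s f)(x)$: by construction $w$ is the unique fixed point in $\Bloc([0,\infty)\times(0,\infty))$ of the Duhamel operator with initial datum $M_s f$. To show that $v$ satisfies the same integral equation, I would split the $\tau$-integral defining $\varphi(s+t,x)$ at $\tau=t$, perform the change of variable $\tau = t+\sigma$ in the upper part, and use the cocycle identity $\int_0^{t+\sigma}\!B(x\e^u)\,\d u = \int_0^{t}\!B(x\e^u)\,\d u + \int_0^{\sigma}\!B((x\e^t)\e^u)\,\d u$ to factor out the exponential and recognize $M_s f(x\e^t)$ in the leftover terms. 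Uniqueness then forces $v=w$. Positivity of $M_t$ is already part of Proposition~\ref{lm:fixedpoint}.

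For the regularity and the two PDE identities, I take $f\in\Bloc\cap\Cr^1(0,\infty)$. Proposition~\ref{lm:fixedpoint} already delivers $\Cr^1$-regularity of $(t,x)\mapsto M_tf(x)$ together with $\p_tM_tf = \A M_tf$. To obtain the remaining identity $\p_tM_tf = M_t\A f$, I revisit equation~\eqref{eq:dtGammag} in the proof of that proposition: taking $g=\varphi$ there exhibits $\p_t\varphi$ as a solution of the Duhamel integral equation with initial datum $\A f$, and uniqueness in $\Bloc$ then gives $\p_tM_tf = M_t\A f$.

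For the stability of $\dot\B$, $\Cdot$ and the norm bound, I would single out the function $\phi(x):=x$, which belongs to $\Bloc\cap\Cr^1(0,\infty)$ and satisfies $\A\phi=\phi$ by direct computation (the mitosis contribution $B(x)(2\cdot\tfrac{x}{2}-x)$ vanishes). The PDE identity of the previous paragraph then yields the pointwise ODE $\p_tM_t\phi(x)=M_t\phi(x)$, hence $M_t\phi = \e^t\phi$. For any $f\in\dot\B$ the sandwich $-\|f\|_{\dot\B}\,\phi \leq f\leq \|f\|_{\dot\B}\,\phi$ holds pointwise, and positivity of $M_t$ propagates it to $|M_tf|\leq \|f\|_{\dot\B}\,\e^t\phi$, which is exactly the claimed bound and in particular shows that $M_t$ preserves $\dot\B$. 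Invariance of $\Cdot$ follows by combining this weighted bound with the preservation of continuity from Proposition~\ref{lm:fixedpoint}. The only real bookkeeping is the splitting argument for the semigroup law; every other step is a clean application of uniqueness and positivity.
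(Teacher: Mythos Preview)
Your proposal is correct and follows essentially the same route as the paper: the semigroup law, positivity, regularity, the identity $\partial_t M_t f = M_t\A f$ via~\eqref{eq:dtGammag}, and the norm bound via $M_t\phi=\e^t\phi$ combined with positivity are all handled exactly as in the paper's proof, with only cosmetic differences. Your treatment is in fact slightly more explicit in two places --- you spell out the splitting/cocycle computation behind the semigroup identity, and you derive $M_t\phi=\e^t\phi$ from the ODE $\partial_t M_t\phi=M_t\A\phi=M_t\phi$ rather than by direct substitution into the Duhamel formula --- but these are the same arguments with more detail shown.
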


\medskip

\begin{proof}

The semigroup property $M_{t+s}=M_tM_s$ follows from the uniqueness of the fixed point in the proof of Proposition~\ref{lm:fixedpoint},
$(t,x)\mapsto M_{t+s}f(x)$ and $(t,x)\mapsto M_t(M_sf)(x)$ being both solutions with initial distribution $M_sf\in\Bloc(0,\infty)$.

\smallskip

The positivity of $M_t$ is given by Proposition~\ref{lm:fixedpoint}.

\smallskip

Proposition~\ref{lm:fixedpoint} also provides the regularity of $(t,x)\mapsto M_tf(x)$ when $f\in\Bloc\cap \Cr^1(0,\infty)$, as well as the identity $\p_tM_tf=\A M_tf$.
Besides, if $f\in D(\A)$ then $\A f\in\Bloc$ and~\eqref{eq:dtGammag} with $g(t,x)=M_tf(x)$ ensures, still by uniqueness of the fixed point, that $\partial_tM_tf=M_t\mathcal Af.$

\smallskip

Simple calculations provide that if $f(x)=x$ then the fixed point of $\Gamma$ is given by $M_tf(x)=x\e^t$.
Together with the positivity of $M_t$ it guarantees that $\|M_tf\|_{\dot\B}\leq\e^t\|f\|_{\dot\B}$ for any $f$ in~$\dot\B$ since $-\|f\|_{\dot\B}\,x\leq f(x)\leq\|f\|_{\dot\B}\,x$ for all $x>0$.
In particular $\dot\B$ is invariant under $M_t$, and $\Cdot$ also by virtue of Proposition~\ref{lm:fixedpoint}.

\end{proof}

We give now another useful property of the positive operators $M_t$, namely that they preserve increasing pointwise limits.

\begin{lem} \label{lem:limitecroissante}
Let $f\in\Bloc(0,\infty)$ and let $(f_n)_{n\in \N}\subset\Bloc(0,\infty)$ be an increasing sequence that converges pointwise to $f$, {\it i.e} for all $x>0$
\[f(x)=\lim_{n\to\infty}\!\uparrow f_n(x).\]
Then for all $t\geq0$ and all $x>0$
\[M_tf(x)=\lim_{n\to\infty} M_t f_n(x).\]
\end{lem}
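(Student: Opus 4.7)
The plan is to pass to the limit in the Duhamel equation satisfied by $M_t f_n$ using the monotone convergence theorem, and then invoke the uniqueness part of Proposition~\ref{lm:fixedpoint}.

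First I would observe that by positivity of $M_t$ (Corollary~\ref{cor:Mtdroite}) applied to $f_{n+1}-f_n\geq 0$, the sequence $(M_tf_n(x))_n$ is nondecreasing for each fixed $t\geq 0$ and $x>0$; similarly, applied to $f-f_n\geq 0$, we get $M_tf_n(x)\leq M_tf(x)$. Here I am using that $M_t$ is linear, which follows from the fact that for any $\alpha,\beta\in\R$ the function $\alpha M_tf_1+\beta M_tf_2$ solves the Duhamel equation with initial data $\alpha f_1+\beta f_2$, so by uniqueness it equals $M_t(\alpha f_1+\beta f_2)$. Consequently the pointwise limit $\psi(t,x):=\lim_n M_tf_n(x)$ is well-defined and satisfies $\psi\leq M_tf$, which in particular places $\psi$ in $\Bloc([0,\infty)\times(0,\infty))$.

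Next I would write the Duhamel identity $M_tf_n=\Gamma_{f_n} M_tf_n$, namely
\[M_tf_n(x)=f_n(x\e^t)\e^{-\int_0^t B(x\e^s)\d s}+2\int_0^t B(x\e^\tau)\e^{-\int_0^\tau B(x\e^s)\d s}M_{t-\tau}f_n\Big(\frac{x\e^\tau}{2}\Big)\d\tau,\]
and pass to the limit $n\to\infty$ on both sides. The first term converges to $f(x\e^t)\e^{-\int_0^t B(x\e^s)\d s}$ by pointwise convergence of $f_n$ to $f$. For the integral, the integrand is nondecreasing in $n$ and bounded above by the integrable function $\tau\mapsto B(x\e^\tau)\e^{-\int_0^\tau B(x\e^s)\d s}M_{t-\tau}f(x\e^\tau/2)$ (integrable on $[0,t]$ since $B$ is locally bounded by~\eqref{hyp:Bsol} and $M_{.}f$ is locally bounded by the previous paragraph), so the monotone convergence theorem (applied to the nonnegative increasing sequence $M_{t-\tau}f_n-M_{t-\tau}f_0$ and then shifting back) shows that the limit equals
\[2\int_0^t B(x\e^\tau)\e^{-\int_0^\tau B(x\e^s)\d s}\psi\Big(t-\tau,\frac{x\e^\tau}{2}\Big)\d\tau.\]

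Therefore $\psi$ itself is a locally bounded fixed point of the mapping $\Gamma$ associated with the initial data $f$. The uniqueness of such a fixed point, established in Proposition~\ref{lm:fixedpoint}, forces $\psi=M_tf$, which is the desired equality. The main (minor) technical point is ensuring the limit is locally bounded and that we legitimately have a dominating function for the integrand so that the monotone/dominated convergence argument applies; this is precisely where the a priori bound $M_tf_n\leq M_tf$, itself a consequence of the positivity of $M_t$, plays the crucial role.
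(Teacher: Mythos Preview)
Your proof is correct and follows essentially the same approach as the paper: bound the increasing sequence $M_tf_n$ above by $M_tf$ via positivity, pass to the limit in the Duhamel formula by monotone convergence, and conclude by the uniqueness part of Proposition~\ref{lm:fixedpoint}. Your write-up is slightly more explicit about linearity, local boundedness of the limit, and the shift needed to apply monotone convergence to a not-necessarily-nonnegative sequence, but the argument is the same.
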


\begin{proof}
Let $f$ and $(f_n)_{n\in \N}$ satisfy the assumptions of the lemma.
For all $t\geq0$, the positivity of $M_t$ ensures that the sequence $(M_tf_n)_{n\in\N}$ is increasing and bounded by $M_tf$.
Denote by $g(t,x)$ the limit of $M_tf_n(x)$.
Using the monotone convergence theorem, we get by passing to the limit in
\[M_tf_n(x) = f_n(x\e^t)\e^{-\int_0^t B(x\e^s)\d s} + 2\int_0^t B(x\e^\tau)\e^{-\int_0^\tau B(x\e^s)\d s}M_{t-\tau}f_n\Big(\frac{x\e^\tau}{2}\Big)\d \tau\]
that
\[g(t,x) = f(x\e^t)\e^{-\int_0^t B(x\e^s)\d s} + 2\int_0^t B(x\e^\tau)\e^{-\int_0^\tau B(x\e^s)\d s}g\Big(t-\tau,\frac{x\e^\tau}{2}\Big)\d \tau.\]
By uniqueness property we deduce that $g(t,x)=M_tf(x)$.
\end{proof}

\subsection{Construction of a measure solution}

Using the results in Section~\ref{subsec:adj}, we define a left action of the semigroup $(M_t)_{t\geq0}$ on $\dot\M.$
To do so we first set for $t\geq0$, $\mu\in\dot\M_+$, and $A\subset(0,\infty)$ Borel set
\[(\mu M_t)(A):=\int_{(0,\infty)}\! M_t\1_A\,\d\mu\]
and verify that $\mu M_t$ such defined is a positive measure on $(0,\infty)$.

\begin{lem}
For all $\mu \in\Mdot_+$ and all $t\geq 0,\ \mu M_t$ defines a positive measure.
Additionally $\mu M_t\in\dot\M_+$ and for any $f\in\dot\B$
\[\langle\mu M_t,f\rangle=\langle\mu,M_tf\rangle.\]
\end{lem}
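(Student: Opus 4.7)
The plan is to verify in turn that (a) the operator $M_t$ is linear on $\Bloc(0,\infty)$, (b) $\mu M_t$ is $\sigma$-additive, (c) the duality $(\mu M_t)(f)=\mu(M_tf)$ holds first for indicators and then for all $f\in\dot\B$, and finally (d) $\mu M_t\in\Mdot_+$. The linearity of $M_t$ is not stated explicitly in Proposition~\ref{lm:fixedpoint} but follows at once from the uniqueness of the fixed point: for scalars $\alpha,\beta\in\R$ and data $f_1,f_2\in\Bloc(0,\infty)$, the function $\alpha M_tf_1+\beta M_tf_2$ is a fixed point of $\Gamma$ with initial datum $\alpha f_1+\beta f_2$, and must therefore coincide with $M_t(\alpha f_1+\beta f_2)$; in particular $M_t 0=0$.

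To establish $\sigma$-additivity of $\mu M_t$, I would consider a pairwise disjoint sequence $(A_n)_{n\in\N}$ of Borel subsets of $(0,\infty)$ with union $A$. The partial sums $\sum_{n=1}^N\1_{A_n}$ form an increasing sequence in $\Bloc(0,\infty)$ converging pointwise to $\1_A$. Linearity of $M_t$ together with Lemma~\ref{lem:limitecroissante} then yields $M_t\1_A=\sum_{n=1}^\infty M_t\1_{A_n}$ as an increasing limit of nonnegative functions; integrating against the positive measure $\mu$ and applying the monotone convergence theorem gives $(\mu M_t)(A)=\sum_n(\mu M_t)(A_n)$. The identity $(\mu M_t)(\emptyset)=0$ is immediate from $M_t 0=0$.

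The duality identity $(\mu M_t)(f)=\mu(M_tf)$ holds by construction when $f=\1_A$, extends by linearity to nonnegative simple Borel functions, and then to all nonnegative Borel functions via Lemma~\ref{lem:limitecroissante} and two applications of monotone convergence (allowing the value $+\infty$). Choosing $f(x)=x$, and noting that $M_t(\mathrm{id})(x)=x\e^t$ (checked directly from the Duhamel formula, and recalled in the proof of Corollary~\ref{cor:Mtdroite}), yields $\int x\,\d(\mu M_t)(x)=\e^t\int x\,\d\mu(x)<\infty$, so $\mu M_t\in\Mdot_+$. For general $f\in\dot\B$, the bound $|f|\leq\|f\|_{\dot\B}\,x$ together with $\mu,\mu M_t\in\Mdot_+$ ensures that both sides of the identity are finite, and the identity is obtained by splitting $f$ into its positive and negative parts. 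The only mild subtlety is that indicator functions $\1_A$ lie in $\Bloc(0,\infty)$ but generally not in $\dot\B$, so all the preparatory constructions must be carried out at the level of $\Bloc(0,\infty)$, and the $\dot\B$-norm estimate of Corollary~\ref{cor:Mtdroite} can only be invoked at the last step.
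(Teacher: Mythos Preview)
Your proof is correct and follows essentially the same route as the paper's: verify $\sigma$-additivity via Lemma~\ref{lem:limitecroissante} and monotone convergence, establish the duality identity for simple functions and extend it to nonnegative $f\in\Bloc$ by monotone limits, then use $M_t(\mathrm{id})=\e^t\,\mathrm{id}$ to get $\mu M_t\in\dot\M_+$ and decompose $f=f_+-f_-$ for general $f\in\dot\B$. Your explicit mention of the linearity of $M_t$ (deduced from uniqueness of the fixed point) is a welcome clarification, since the paper uses it silently.
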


\begin{proof}

Let $\mu\in\dot\M_+$ and $t\geq0.$
We first check that $\mu M_t$ is a positive measure.

\smallskip

Clearly $\mu M_t(A)\geq0$ for any Borel set $A$, and $\mu M_t(\varnothing) =  \intRpe M_t\mathbf{0}\,\d \mu = 0$.

\smallskip

Let $(A_n)_{n\in \N}$ be a countable sequence of disjoint Borel sets of $\Rpe$ and define $f_n = \sum_{k=0}^n \1_{A_k} =  \mathds{1}_{\bigsqcup_{k=0}^n A_k}$.
For every integer $n,$ one has
\[\mu M_t \left(\bigsqcup_{k=0}^n A_k\right)=\int_{(0,\infty)}\! M_tf_n\,\d \mu = \sum_{k=0}^n \int_{(0,\infty)}\! M_t(\1_{A_k})\d \mu = \sum_{k=0}^n \mu M_t(A_k).\]
The sequence $(f_n)_{n\in\N}$ is increasing and its pointwise limit is $f=\1_{\bigsqcup_{k=0}^\infty A_k}$, which belongs to $\Bloc(0,\infty)$.
We deduce from Lemma~\ref{lem:limitecroissante} and the monotone convergence theorem that
\[\lim_{n\to\infty}\mu M_t \left(\bigsqcup_{k=0}^n A_k\right)=\lim_{n\to\infty}\int_{(0,\infty)}\! M_tf_n\,\d \mu = \int_{(0,\infty)}\! M_tf\,\d \mu = \mu M_t\left(\bigsqcup_{k=0}^\infty A_k\right)\]
where the limit lies in $[0,+\infty].$
This ensures that
\[\mu M_t\left(\bigsqcup_{k=0}^\infty A_k\right)=\sum_{k=0}^\infty \mu M_t(A_k)\]
and $\mu M_t$ thus satisfies the definition of a positive measure.

\medskip

By definition of $\mu M_t$, the identity $\langle\mu M_t,f\rangle=\langle\mu,M_tf\rangle$ is clearly true for any simple function $f$.
Since any nonnegative measurable function is the increasing pointwise limit of simple functions, Lemma~\ref{lem:limitecroissante} ensures that it is also valid in $[0,+\infty]$ for any nonnegative $f\in\Bloc(0,\infty)$.
Considering $f(x)=x$ we get $\langle\mu M_t,f\rangle=\langle\mu,f\rangle\e^t<+\infty$, so that $\mu M_t\in\dot\M_+$.
Finally, decomposing $f\in\dot\B$ as $f=f_+-f_-$ we readily obtain that $\langle\mu M_t,f\rangle=\langle\mu,M_tf\rangle$.

\end{proof}

Now for $\mu\in\dot\M$ and $t\geq0$, we naturally define $\mu M_t\in\dot\M$ by
\begin{equation}\label{eq:muMt+-}
\mu M_t=\mu_+M_t-\mu_-M_t.
\end{equation}
It is then clear that the identity $\langle\mu M_t,f\rangle=\langle\mu,M_tf\rangle$ is still valid for $\mu\in\dot\M$ and $f\in\dot\B$.
Notice that this decomposition~\eqref{eq:muMt+-} as the difference of two positive measures is not in general the Jordan decomposition of $\mu M_t$.

\begin{prop}
The left action of $(M_t)_{t\geq0}$ defines a positive semigroup in $\dot\M$, which satisfies for all $t\geq0$ and all $\mu\in\dot\M$
\[\|\mu M_t\|_\Mdot \leq \e^t\|\mu \|_\Mdot.\]
\end{prop}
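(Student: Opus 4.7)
The plan is to verify in turn the three properties hidden in the statement: the fact that $\mu \mapsto \mu M_t$ is well-defined as a bounded operator on $\dot\M$, the semigroup identity $\mu M_{t+s} = (\mu M_t)M_s$, and positivity, then conclude with the norm bound. Everything rests on two inputs already at our disposal: the duality identity $(\mu M_t)(f) = \mu(M_t f)$ just proved for $\mu \in \dot\M$, $f\in\dot\B$, and the facts gathered in Corollary~\ref{cor:Mtdroite}, namely that $(M_t)_{t\geq0}$ is a positive semigroup on $\Bloc(0,\infty)$ with $M_t w = \e^t w$ for the weight $w(x)=x$.

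Positivity is immediate by construction: if $\mu\in\dot\M_+$, then $\mu = \mu_+$ and $\mu_- = 0$, so $\mu M_t = \mu_+ M_t \in \dot\M_+$ by the previous lemma. The semigroup property is obtained by pairing against test functions: for every $f\in\dot{\mathcal{C}}_0$ and every $\mu\in\dot\M$,
\[(\mu M_{t+s})(f) = \mu(M_{t+s} f) = \mu(M_t M_s f) = (\mu M_t)(M_s f) = \big((\mu M_t) M_s\big)(f),\]
where the second equality uses the semigroup identity on $\Bloc(0,\infty)$ from Corollary~\ref{cor:Mtdroite} and the invariance of $\dot{\mathcal{C}}_0$ under $M_t$. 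Since $\dot\M \simeq \dot{\mathcal{C}}_0'$, this suffices to conclude that $\mu M_{t+s} = (\mu M_t) M_s$ in $\dot\M$.

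For the norm estimate, I would proceed as follows. Given $\mu\in\dot\M$ with Hahn-Jordan decomposition $\mu = \mu_+ - \mu_-$, the pair $(\mu_+ M_t, \mu_- M_t)$ is a representative of the equivalence class $\mu M_t$ in $\dot\M$, but it is not in general mutually singular. The uniqueness and minimality of the Hahn-Jordan decomposition then yield the standard inequality $|\mu M_t| \leq \mu_+ M_t + \mu_- M_t$ as positive measures. Using this, the duality $(\nu M_t)(f) = \nu(M_t f)$ for positive $\nu$, and the explicit identity $M_t w = \e^t w$, we obtain
\[\|\mu M_t\|_{\dot\M} = \int_{(0,\infty)} w \,\d|\mu M_t| \leq (\mu_+ M_t)(w) + (\mu_- M_t)(w) = |\mu|(M_t w) = \e^t \|\mu\|_{\dot\M}.\]

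The only mildly delicate point in this argument is verifying that $|\mu M_t| \leq \mu_+ M_t + \mu_- M_t$, which is the general fact that for any two positive measures $\nu_1,\nu_2$ with $\nu_1 - \nu_2 = \mu$, the Hahn-Jordan decomposition satisfies $|\mu| \leq \nu_1 + \nu_2$; everything else is bookkeeping on top of Corollary~\ref{cor:Mtdroite} and the identity $M_t(\mathrm{id}) = \e^t \mathrm{id}$, which is exactly what makes the weight $w(x)=x$ the natural choice for this semigroup.
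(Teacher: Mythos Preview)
Your argument is correct and follows the same route as the paper, which simply invokes the duality relation $(\mu M_t)(f)=\mu(M_tf)$ together with Corollary~\ref{cor:Mtdroite}. The only difference is cosmetic: the paper implicitly reads the norm bound off the dual characterization $\|\mu M_t\|_{\dot\M}=\sup_{\|f\|_{\dot\B}\leq1}\mu(M_tf)$ combined with $\|M_tf\|_{\dot\B}\leq\e^t\|f\|_{\dot\B}$, whereas you unpack the same content through the inequality $|\mu M_t|\leq\mu_+M_t+\mu_-M_t$ and the explicit identity $M_tw=\e^tw$. One small remark: in the semigroup step you invoke the invariance of $\Cdot_0$ under $M_t$, which the paper does not state; this is unnecessary, since the duality $(\nu)(M_sf)=(\nu M_s)(f)$ already holds for all $f\in\dot\B$ and it suffices that $M_sf\in\dot\B$, which is given by Corollary~\ref{cor:Mtdroite}.
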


\begin{proof}
Using the duality relation $\langle\mu M_t,f\rangle=\langle\mu,M_tf\rangle$, it is a direct consequence of Corollary~\ref{cor:Mtdroite}.
\end{proof}

\medskip

Finally we prove that the (left) semigroup $(M_t)_{t\geq0}$ yields the unique measure solutions to Equation~\eqref{eq:mitosis}.

\begin{theorem}
For any $\mu\in\dot\M$, the family $(\mu M_t)_{t\geq0}$ is the unique solution to Equation~\eqref{eq:mitosis}, in the sense of Definition~\ref{def:measure sol}, with initial distribution $\mu.$
\end{theorem}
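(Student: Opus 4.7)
The plan is to split the argument into an existence part and a uniqueness part, with the uniqueness being the more delicate one.

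\textbf{Existence.} I would show directly that $(\mu M_t)_{t\geq 0}$ satisfies Definition~\ref{def:measure sol}. For the continuity of $t\mapsto(\mu M_t)(f)=\mu(M_tf)$ when $f\in\Cdot$, the pointwise continuity in $(t,x)$ of $M_tf$ follows from Proposition~\ref{lm:fixedpoint} (applied to $f$ continuous), while the uniform domination $|M_tf(x)|\leq\e^T\|f\|_{\Cdot}\,x$ on $[0,T]\times(0,\infty)$ from Corollary~\ref{cor:Mtdroite} together with $x\in L^1(|\mu|)$ lets dominated convergence finish the job. For the integral identity, take $f\in D(\A)$. Corollary~\ref{cor:Mtdroite} gives $\partial_sM_sf=M_s\A f$ pointwise, so that for each $x>0$
\[M_tf(x)-f(x)=\int_0^t M_s\A f(x)\,\d s.\]
Integrating against $\mu$ and exchanging with Fubini--using $|M_s\A f(x)|\leq\e^s\|\A f\|_{\dot\B}\,x$ and $x\in L^1(|\mu|)$--yields
\[(\mu M_t)(f)-\mu(f)=\int_0^t\mu(M_s\A f)\,\d s=\int_0^t(\mu M_s)(\A f)\,\d s.\]

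\textbf{Uniqueness.} Let $(\tilde\mu_t)_{t\geq 0}$ be another measure solution starting from $\mu$ and set $\nu_t=\tilde\mu_t-\mu M_t\in\Mdot$, so that $\nu_0=0$ and, for every $f\in D(\A)$,
\[\nu_t(f)=\int_0^t\nu_s(\A f)\,\d s.\]
Fix $T>0$ and $g\in D(\A)$. The key observation, coming from Corollary~\ref{cor:Mtdroite}, is that for every $\tau\in[0,T]$ one has $M_\tau g\in D(\A)$, because $\A M_\tau g=M_\tau\A g\in\dot\B$. I would then study
\[h(t):=\nu_t(M_{T-t}g),\qquad t\in[0,T],\]
and show that it is constant equal to $0$; since $h(T)=\nu_T(g)$, this forces $\nu_T(g)=0$. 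For $t,t+\epsilon\in[0,T]$, split
\[h(t+\epsilon)-h(t)=\bigl[\nu_{t+\epsilon}-\nu_t\bigr](M_{T-t-\epsilon}g)+\nu_t\bigl(M_{T-t-\epsilon}g-M_{T-t}g\bigr).\]
The first bracket equals $\int_t^{t+\epsilon}\nu_s(\A M_{T-t-\epsilon}g)\,\d s$ by applying the weak equation with the fixed test function $M_{T-t-\epsilon}g\in D(\A)$. For the second, I would write $M_{T-t-\epsilon}g-M_{T-t}g=-\int_0^\epsilon M_{T-t-\epsilon+r}\A g\,\d r$ (from Corollary~\ref{cor:Mtdroite}) and pass the $\nu_t$ inside by Fubini, using the uniform bound $\|M_\sigma\A g\|_{\dot\B}\leq\e^T\|\A g\|_{\dot\B}$ on $\sigma\in[0,T]$ and $x\in L^1(|\nu_t|)$. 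Dividing by $\epsilon$ and letting $\epsilon\to 0$, the continuity in $s$ of the maps $s\mapsto\nu_s(\A M_{T-t}g)$ and $s\mapsto\nu_t(M_s\A g)$ (again by dominated convergence) yields a right derivative
\[h'_+(t)=\nu_t(\A M_{T-t}g)-\nu_t(M_{T-t}\A g)=0,\]
where the last equality uses $\A M_{T-t}g=M_{T-t}\A g$. Since $h$ is continuous on $[0,T]$ with vanishing right derivative, $h$ is constant, hence $\nu_T(g)=0$.

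\textbf{Conclusion and main obstacle.} To conclude $\nu_T=0$ in $\Mdot$, I would argue that $D(\A)$ is rich enough to separate points of $\Mdot\simeq\Cdot_0'$: for instance $C_c^1(0,\infty)\subset D(\A)$ (since $B$ is continuous by~\eqref{hyp:Bsol} and thus $\A g$ is bounded and compactly supported for $g\in C_c^1$), and $C_c^1(0,\infty)$ is dense in $\Cdot_0$ for the $\dot\B$-norm by a standard cut-off and mollification argument. The main obstacle I anticipate is the rigorous justification of the differentiation of $h$, specifically the commutation of $\nu_t$ with the time integrals and the uniform control needed to pass to the limit $\epsilon\to 0$; everything else follows routinely from Proposition~\ref{lm:fixedpoint} and Corollary~\ref{cor:Mtdroite}, with the bound $\|M_s f\|_{\dot\B}\leq\e^s\|f\|_{\dot\B}$ and $x\in L^1(|\mu|)$ for $\mu\in\Mdot$ providing all the domination.
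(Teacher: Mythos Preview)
Your existence argument matches the paper's almost verbatim. The uniqueness strategy is also the same in spirit---freeze a terminal time, pair the solution against the backward-evolved test function, and show the pairing is constant---but your execution differs from the paper's in a way that leaves a genuine gap, precisely at the point you flagged as the main obstacle.

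The issue is the first bracket in your splitting. After writing $[\nu_{t+\epsilon}-\nu_t](M_{T-t-\epsilon}g)=\int_t^{t+\epsilon}\nu_s(\A M_{T-t-\epsilon}g)\,\d s$ and dividing by $\epsilon$, you invoke continuity of $s\mapsto\nu_s(\A M_{T-t}g)$ to pass to the limit. But the test function inside the integral is $\A M_{T-t-\epsilon}g=M_{T-t-\epsilon}\A g$, not $M_{T-t}\A g$; the residual term
\[
\frac{1}{\epsilon}\int_t^{t+\epsilon}\nu_s\big(M_{T-t-\epsilon}\A g-M_{T-t}\A g\big)\,\d s
\]
has to be shown to vanish. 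Bounding it by $\sup_{s}\|\nu_s\|_{\dot\M}\cdot\|M_{T-t-\epsilon}\A g-M_{T-t}\A g\|_{\dot\B}$ would require norm continuity of $\sigma\mapsto M_\sigma\A g$ in $\dot\B$, which is not established (and cannot be obtained from a further differentiation since $\A g$ need not lie in $D(\A)$ when $B$ is merely continuous). Dominated convergence against the \emph{varying} measures $\nu_s$ is not available either. A second, smaller omission: you never argue that $s\mapsto\|\nu_s\|_{\dot\M}$ is locally bounded; the paper obtains this from the uniform boundedness principle applied to the family of continuous functionals $f\mapsto\tilde\mu_s(f)$ on $\Cdot_0$, and it is essential for any estimate of the type above.

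The paper sidesteps the cross-term problem by differentiating not $s\mapsto\mu_s(M_{t-s}f)$ but the integrated quantity $s\mapsto\int_0^s\mu_\tau(M_{t-s}f)\,\d\tau$. In the resulting three-term decomposition, the troublesome cross term becomes $\int_s^{s+h}\mu_\tau\big(\tfrac{M_{t-s-h}f-M_{t-s}f}{h}\big)\,\d\tau$, whose integration interval has length $h$; combined with the uniform bound $\big\|\tfrac{M_{t-s-h}f-M_{t-s}f}{h}\big\|_{\dot\B}\leq e^{t-s}\|\A f\|_{\dot\B}$ and the local boundedness of $\|\mu_\tau\|_{\dot\M}$, this term is $O(h)$ without any appeal to norm continuity. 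After showing the derivative equals $\mu_0(M_{t-s}f)$, one integrates in $s$ and differentiates in $t$ to conclude. Adopting this integrated form (or, alternatively, restricting to test functions for which $\A g\in D(\A)$, which however requires extra regularity on $B$) is what closes the gap in your argument.
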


\begin{proof}

Let $\mu\in\dot\M$.
We first check that $t\mapsto \langle\mu M_t,f\rangle$ is continuous for any $f\in \Cdot$ by writing
\begin{align*}
|\langle\mu M_t,f\rangle-\langle\mu,f\rangle|&\leq \left|\int_0^\infty f(x\e^t)\e^{-\int_0^t B(x\e^s)\d s}-f(x)\,\mu(\d x)\right| \\
&\qquad+ \left|2\int_0^\infty \int_0^t B(x\e^\tau)\e^{-\int_0^\tau B(x\e^s)\d s}M_{t-\tau}f\Big(\frac{x\e^\tau}{2}\Big)\d \tau\,\mu(\d x)\right|\\
&\leq \int_0^\infty \big|f(x\e^t)\e^{-\int_0^t B(x\e^s)\d s}-f(x)\big|\,|\mu|(\d x) \\
&\qquad\qquad+ \e^t\|f\|_{\dot\B}\int_0^\infty (1-\e^{-\int_0^t B(xe^s)\d s})x\,|\mu|(\d x).
\end{align*}
The two terms in the right hand side vanish as $t$ tends to $0$ by dominated convergence theorem and the continuity of $t\mapsto \langle\mu M_t,f\rangle$ follows from the semigroup property.

\medskip

Now consider $f\in D(\A)$.
Integrating $\partial_tM_tf=M_t\A f$ between $0$ and $t$ we obtain for all $x>0$
\[M_tf(x)=f(x)+\int_0^tM_s(\A f)(x)\,\d s.\]
By definition of $D(\A)$, the function $\A f$ belongs to $\dot \B$, so we deduce the inequality $|M_s(\A f)(x)| \leq \|\A f\|\e^sx$ and we can use Fubini's theorem to get by integration against $\mu$
\[\langle\mu,M_tf\rangle=\langle\mu,f\rangle+\left\langle\mu,\int_0^t M_s(\A f)\d s\right\rangle = \langle\mu,f\rangle+\int_0^t \langle\mu ,M_s(\A f)\rangle\d s.\]
The duality relation $\langle\mu M_t,f\rangle=\langle\mu, M_tf\rangle$ then guarantees that $(\mu M_t)$ satisfies~\eqref{eq:mitosismeasure}.

\medskip

It remains to check the uniqueness.
Let $(\mu_t)_{t\geq0}$ be a solution to Equation~\eqref{eq:mitosis} with $\mu_0=\mu$.
Recall that it implies in particular that $t\mapsto\langle\mu_t,f\rangle$ is continuous for any $f\in \Cdot,$ and consequently $t\to\mu_t$ is locally bounded for the norm $\|\cdot\|_{\dot\M}$ due to the uniform boundedness principle.
We want to verify that $\mu_t=\mu M_t$ for all $t\geq0$.
Fix $t>0$ and $f\in \Cr^1_c(0,\infty)$, and let us compute the derivative of the mapping
\[s\mapsto\int_0^s\langle\mu_\tau,M_{t-s}f\rangle\,\d\tau\]
defined on $[0,t]$.
For $0<s<s+h<t$ we have
\begin{align*}
\frac{1}{h}&\bigg[\int_0^{s+h}\langle\mu_\tau,M_{t-s-h}f\rangle\,\d\tau-\int_0^s\langle\mu_\tau,M_{t-s}f\rangle\,\d\tau\bigg]=\frac{1}{h}\int_s^{s+h}\langle\mu_\tau,M_{t-s}f\rangle\,\d\tau\\
&+\int_s^{s+h}\left\langle\mu_\tau,\frac{M_{t-s-h}f-M_{t-s}f}{h}\right\rangle\d\tau+\int_0^s\left\langle\mu_\tau,\frac{M_{t-s-h}f-M_{t-s}f}{h}\right\rangle\d\tau.
\end{align*}
The convergence of the first term is a consequence of the continuity of $\tau\mapsto\langle\mu_\tau,M_{t-s}f\rangle$
\[\frac{1}{h}\int_s^{s+h}\langle\mu_\tau, M_{t-s}f\rangle\,\d\tau\xrightarrow[h\to0]{}\langle\mu_s,M_{t-s}f\rangle.\]
For the second term we use that
\[M_{t-s}f-M_{t-s-h}f = M_{t-s-h}\int_0^h\p_\tau M_\tau f\,\d \tau = M_{t-s-h}\int_0^h M_\tau \A f\,\d \tau\]
to get, since $\tau\mapsto\|\mu_\tau\|_{\dot\M}$ is locally bounded,
\[\left|\int_s^{s+h}\left\langle\mu_\tau,\frac{M_{t-s-h}f-M_{t-s}f}{h}\right\rangle\d\tau\right|\leq h\sup_{\tau\in[0,t]}\left\|\mu_\tau\right\|_\Mdot\, \left\|\A f\right\|_{\dot\B}\e^{t-s}\xrightarrow[h\to0]{}0.\]
For the last term we have, by dominated convergence and using the identity $\partial_tM_tf=\A M_tf,$
\[\int_0^s\left\langle\mu_\tau,\frac{M_{t-s-h}f-M_{t-s}f}{h}\right\rangle\d\tau\xrightarrow[h\to0]{}-\int_0^s\langle\mu_\tau,\A M_{t-s} f\rangle\,\d\tau.\]
Finally we get
\[\frac{\d}{\d s}\int_0^s\langle\mu_\tau,M_{t-s}f\rangle\,\d\tau=\langle\mu_s,M_{t-s}f\rangle-\int_0^s\langle\mu_\tau,\A M_{t-s} f\rangle\,\d\tau=\langle\mu_0,M_{t-s}f\rangle.\]
To obtain the last equality, one has to notice that $f\in D(\A)$, so Corollary~\ref{cor:Mtdroite} ensures that $M_{t-s}f \in D(\A)$ can be used in Definition~\ref{def:measure sol} in place of $f$.
Integrating between $s=0$ and $s=t$ we obtain, since $\mu_0=\mu$,
\[\int_0^t\langle\mu_\tau ,f\rangle\,\d\tau=\int_0^t\langle\mu,M_{t-s}f\rangle\,\d s=\int_0^t\langle\mu M_{\tau},f\rangle\,\d\tau\]
then by differentiation with respect to $t$
\[\langle\mu_t,f\rangle=\langle\mu M_t,f\rangle.\]
By density of $\Cr_c^1(0,\infty)$ in $\Cdot_0$, it ensures that $\mu_t=\mu M_t$.
\end{proof}

\section{Long time asymptotics\label{sec:asymp}}

In~\cite{GreinerNagel} Greiner and Nagel deduce the convergence toward a rotation semigroup from a general result of spectral theory of positive semigroups, valid in $L^p$ spaces with $1\leq p<\infty$~\cite[C-IV, Th. 2.14]{Nagel86}.
In order to apply this theoretical result, they need the equation to be set on a compact size interval $[\alpha,\beta]\subset(0,\infty)$.
In~\cite{vanBrunt2018} van Brunt~{\it et al.} take advantage of the Mellin transform to solve Equation~\eqref{eq:mitosis} explicitly and deduce the convergence in $L^1((0,\infty),x^{1+r}\d x)$ where $r>0$.
However, this method requires the division rate to be monomial, namely $B(x)=x^r$, and $u(0,\cdot)$ to be a $\Cr^2$ function with polynomial decay at $0$ and $\infty.$
In~\cite{BDG} the authors combine general relative entropy inequalities and the Hilbert structure of the space $L^2((0,\infty),x/\U(x)\,\d x)$ to prove that the solutions converge to their orthogonal projection onto the closure of the subspace spanned by the boundary eigenfunctions.
General relative entropy is an elegant and powerful method for deriving the convergence to the Perron eigenfunction, see~\cite{PerthameTransport}, and the novelty of~\cite{BDG} was to apply it to a non-ergodic case.
This method was recently extended to the measure solutions of the growth-fragmentation equation with smooth fragmentation kernel~\cite{Debiec2018}, but this cannot be applied to the singular case of the mitosis kernel.
Our approach rather relies on the general relative entropy of the dual equation~\eqref{eq:dual}.
It allows us to both define a projector on the boundary eigenspace despite the absence of Hilbert structure and prove the weak-* convergence to this projection.
We then use Harris's ergodic theorem to strengthen it into a convergence in weighted total variation norm with exponential speed.
Besides, the exponential rate of convergence can be estimated explicitly in terms of the division rate $B$.

\

\begin{lem}[General Relative Entropy]\label{lem:GRE}
Let $H : \R \to \R$ be a differentiable convex function. Then for all $f\in\dot\B\cap \Cr^1(0,\infty)$ we have
\[\fdt\int_0^\infty x\,\U_0(x)H\left(\frac{M_tf(x)}{x\,\e^t}\right) \d x = -D^H[\e^{-t}M_tf]\leq 0\]
with $D^H$ defined on $\dot{\B}$ by
\begin{align*}
D^H[f] = &\int_0^\infty x B(x)\U_0(x) \\
&\quad \left[H'\left(\frac{f(x)}{x}\right)\left(\frac{f(x)}{x} - \frac{f(x/2)}{x/2}\right) + H\left(\frac{f(x/2)}{x/2}\right) - H\left(\frac{f(x)}{x}\right)\right]\d x.
\end{align*}
\end{lem}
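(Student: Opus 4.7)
The plan is to rescale so that the Perron eigenvalue $\lambda=1$ and the adjoint $\phi(x)=x$ are absorbed, and then to differentiate the entropy in time. Set $v(t,x):=M_tf(x)/(x\e^t)$. Using $\partial_t M_tf=\A M_tf$ from Corollary~\ref{cor:Mtdroite}, a direct calculation shows that $v$ is continuously differentiable in both variables and solves the homogeneous equation
\[\partial_t v(t,x)=x\,\partial_x v(t,x)+B(x)\bigl(v(t,x/2)-v(t,x)\bigr).\]
The target is then to control $\fdt\int_0^\infty x\,\U(x)H(v(t,x))\,\d x$.

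Differentiating under the integral (justified by the $\Cr^1$ regularity of $v$, the uniform bound $\|v(t,\cdot)\|_\infty\leq\|f\|_{\dot\B}$ inherited from Corollary~\ref{cor:Mtdroite}, and the strong decay $x^r\,\U\in L^1\cap L^\infty$ for every $r\in\R$ recalled from the theorem of \cite{DG}), the chain rule and the equation for $v$ produce
\[\fdt\int_0^\infty x\U H(v)\,\d x=\int_0^\infty x^2\U\,\partial_xH(v)\,\d x+\int_0^\infty xB\U\,H'(v(x))\bigl(v(x/2)-v(x)\bigr)\,\d x.\]
For the transport contribution I would integrate by parts, the boundary terms at $0$ and $\infty$ being killed by the boundedness of $H(v)$ together with the decay of $\U$. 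The Perron equation~\eqref{Perron} gives $(x\U)'=-(B+1)\U+4B(2x)\U(2x)$, hence
\[(x^2\U(x))'=-xB(x)\U(x)+4xB(2x)\U(2x),\]
and after the change of variables $y=2x$ in the resulting $\U(2x)$ term, the transport piece collapses to $\int_0^\infty xB\U\bigl[H(v(x))-H(v(x/2))\bigr]\d x$.

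Adding the transformed transport contribution and the original fragmentation one gives
\[\fdt\int_0^\infty x\U H(v)\,\d x=\int_0^\infty xB\U\,\Big[H(v(x))-H(v(x/2))+H'(v(x))\bigl(v(x/2)-v(x)\bigr)\Big]\d x.\]
Rewriting $v(x)=(\e^{-t}M_tf)(x)/x$ and $v(x/2)=(\e^{-t}M_tf)(x/2)/(x/2)$ identifies the right-hand side with $-D^H[\e^{-t}M_tf]$. The tangent inequality $H(a)\geq H(b)+H'(b)(a-b)$ valid for any convex $H$ then shows that the bracketed integrand is non-positive, which simultaneously yields the monotonicity and the non-negativity of $D^H$.

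The main difficulty I expect is ensuring that the analytic manipulations are legitimate, rather than the algebra itself: the vanishing of the boundary terms $[x^2\U H(v)]_0^\infty$ in the integration by parts, the applicability of dominated convergence to exchange $\fdt$ with the integral, and the separate integrability of each piece. All of these rest on combining the uniform bound $\|v(t,\cdot)\|_\infty\leq\|f\|_{\dot\B}$ from Corollary~\ref{cor:Mtdroite} with the weighted integrability $x^r\,\U\in L^1\cap L^\infty$ recalled from \cite{DG}; once these technical points are settled, the identity and the sign are immediate consequences of the Perron equation and the convexity of $H$.
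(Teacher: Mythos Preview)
Your proposal is correct and follows essentially the same route as the paper. The only organizational difference is that the paper applies the transport operator $\partial_t - x\partial_x$ directly to the entropy density $x\,\U(x)H(M_tf(x)/(x\e^t))$ and then integrates in $x$, whereas you first pass to the rescaled quantity $v=M_tf/(x\e^t)$, derive its equation, and then integrate by parts; the algebra (Perron equation, change of variables $y=2x$, convexity inequality) is identical in both.
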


\smallskip

\begin{proof}
For $f\in\dot\B\cap \Cr^1(0,\infty)$ the function $(t,x)\mapsto M_tf(x)$ is continuously differentiable and verifies $\partial_tM_tf(x)=\A M_tf(x)$, see Corollary~\ref{cor:Mtdroite}.
Simple computations then yield, using that $\U$ satisfies~\eqref{Perron},
\begin{align*}
\Big(\ddt-&x\ddx\Big)\left(x\,\U_0(x)H\left(\frac{M_tf(x)}{x\,\e^t}\right)\right) \\
&= \e^{-t}x\,\U_0(x)B(x)H'\left(\frac{M_tf(x)}{x\,\e^t}\right)\left(\frac{M_tf(x/2)}{x/2}-\frac{M_tf(x)}{x}\right)\\
& \qquad - H\left(\frac{M_tf(x)}{x\,\e^t}\right)x\left(4B(2x)\U_0(2x) - B(x)\U_0(x) - \U_0(x)\right)
\end{align*}
and the conclusion follows by integration.
\end{proof}

This result reveals the lack of coercivity of the equation in the sense that the dissipation $D^H[f]$ does not vanish only for $f(x)=\phi_0(x)=x$ but for any function $f$ such that $f(2x)=2f(x)$ for all $x>0.$
In particular all the eigenfunctions $\phi_k$ satisfy this relation, so $D^H[\Re(\phi_k)]=D^H[\Im(\phi_k)]=0$.
More precisely we have the following result about the space
\[X:=\big\{f\in \Cdot^\C\,|\ \forall x>0,\ f(2x)=2f(x)\big\}\]
where $\Cdot^\C$ is the space of complex-valued continuous functions on $(0,\infty)$ such that $\|f\|_{\Cdot^\C}=\sup_{x>0}|f(x)|/x<\infty$.

\begin{lem}\label{lem:X}
We have the identity
\[X =\overline{\mathrm{span}}(\phi_k)_{k\in \Z}\]
and more specifically any $f\in X$ is the limit in $(\Cdot^\C,\|\cdot\|_{\Cdot^\C})$ of a Fejér type sum
\[f = \lim_{N\to\infty}\sum_{k=-N}^N \left(1-\frac{|k|}{N}\right)\langle\nu_k,f\rangle\phi_k.\]
\end{lem}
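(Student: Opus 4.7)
The plan is to reduce the statement to classical Fourier analysis on the torus $\R/(\log 2)\Z$ via the logarithmic substitution $y=\log x$. The inclusion $\overline{\mathrm{span}}(\phi_k)_{k\in\Z}\subset X$ is immediate: each $\phi_k$ lies in $\Cdot^\C$ with $\|\phi_k\|_{\Cdot}=1$, and $\phi_k(2x)=2\phi_k(x)$ since $2^{2ik\pi/\log 2}=\e^{2ik\pi}=1$, while $X$ is plainly closed in $(\Cdot^\C,\|\cdot\|)$.

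For the reverse inclusion together with the explicit Fejér formula, I would fix $f\in X$ and introduce $g(x):=f(x)/x$. This $g$ is bounded, continuous and satisfies $g(2x)=g(x)$, so $\tilde g(y):=g(\e^y)$ is a continuous $T$-periodic function on $\R$ with $T=\log 2$, and $\|\tilde g\|_\infty=\|f\|_{\Cdot}$. The classical Fejér theorem on the torus then gives uniform convergence of the Cesàro partial sums of its Fourier series to $\tilde g$; multiplying by $\e^y=x$ and observing $x\cdot\e^{2ik\pi\log x/T}=\phi_k(x)$, this translates into convergence
\[
\sum_{k=-N}^N\Big(1-\tfrac{|k|}{N}\Big)\hat g(k)\,\phi_k\xrightarrow[N\to\infty]{}f
\]
in $(\Cdot^\C,\|\cdot\|)$, where $\hat g(k):=\tfrac{1}{T}\int_0^T\tilde g(y)\,\e^{-2ik\pi y/T}\,\d y$. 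This already yields the second inclusion $X\subset\overline{\mathrm{span}}(\phi_k)$.

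It then remains to identify $\hat g(k)$ with $\nu_k(f)$. Both $f\mapsto\hat g(k)$ and $f\mapsto\nu_k(f)$ are bounded linear functionals on $(X,\|\cdot\|_{\Cdot})$---the first one trivially, the second because $|\nu_k(f)|\leq\|f\|_{\Cdot}\int_0^\infty x\,\U(x)\,\d x=\|f\|_{\Cdot}$---so by the density of $\mathrm{span}(\phi_k)$ just obtained it suffices to check their equality on the basis elements $\phi_j$. This reduces to the biorthogonality relation
\[
\nu_k(\phi_j)=\int_0^\infty x^{1+\frac{2i(j-k)\pi}{\log 2}}\,\U(x)\,\d x=\delta_{k,j}.
\]

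The main obstacle is precisely this biorthogonality, which I would derive from the primal Perron equation~\eqref{Perron}. Multiplying $(x\U)'+(B+1)\U=4B(2\cdot)\U(2\cdot)$ by $\phi_n$ with $n:=j-k$, integrating over $(0,\infty)$, performing an integration by parts on the transport term via $x\phi_n'=\lambda_n\phi_n$, and changing variable $y=2x$ on the fragmentation term together with the key identity $\phi_n(y/2)=\tfrac12\phi_n(y)$, the $B\U$ contributions cancel and one is left with $(1-\lambda_n)\int_0^\infty\phi_n\,\U\,\d x=0$. Since $\lambda_n\neq 1$ for $n\neq 0$, this forces the integral to vanish in that case, while the normalization $\int_0^\infty x\,\U(x)\,\d x=1$ handles $n=0$. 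The remaining technical care lies in ensuring that the boundary terms $[x\phi_n\U]_0^\infty$ vanish and every integrand is integrable, both of which follow from the decay estimate $x^r\U\in L^1\Rpe\cap L^\infty\Rpe$ for every $r\in\R$ recalled from~\cite{DG}, combined with the polynomial growth bounds on $B$ from~\eqref{hyp:B}.
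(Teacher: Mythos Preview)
Your proof is correct and follows essentially the same route as the paper: both reduce to the classical Fej\'er theorem via the substitution $y=\log x$ and then identify the Fourier coefficients with $\nu_k(f)$ through the biorthogonality $\nu_k(\phi_j)=\delta_{kj}$. The only minor differences are that the paper deduces $\hat\theta(k)=\nu_k(f)$ by the direct limit estimate $|\nu_k(f)-\hat\theta(k)|\leq\|f-F_N(f)\|_{\dot\B}+\tfrac{|k|}{N}\|f\|_{\dot\B}$ rather than your density argument, and that it simply asserts the biorthogonality from $\lambda_k\neq\lambda_l$ without spelling out the eigenvalue computation you provide.
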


\begin{proof}
The vector subspace $X$ contains all the $\phi_k$ and is closed in $(\Cdot^\C,\|\cdot\|_{\Cdot^\C})$, so it contains $\overline{\mathrm{span}}(\phi_k)_{k\in \Z}$.

\medskip

To obtain the converse inclusion, we consider $f\in X$ and we write it as
\[f(x) = x\,\theta(\log x)\]
with $\theta:\R\to\C$ a continuous $\log 2$-periodic function.
The Fejér theorem ensures that the Fejér sum, namely the Cesàro means of the Fourier series
\[\sigma_N(\theta)(y) := \frac1N\sum_{n=0}^{N-1}\sum_{k=-n}^n\hat\theta(k)\e^{\frac{2ik\pi}{\log 2}y}= \sum_{k=-N}^N \left(1-\frac{|k|}{N}\right)\hat{\theta}(k)\e^{\frac{2ik\pi}{\log 2}y}\]
where
\[\hat{\theta}(k) = \frac{1}{\log 2}\int_0^{\log 2}\theta(y)\e^{-\frac{2ik\pi}{\log 2}y}\d y\]
converges uniformly on $\R$ to $\theta.$
We deduce that the sequence ${(F_N(f))}_{N\geq1}\subset\mathrm{span}(\phi_k)_{k\in\Z}$ defined by
\[F_N(f)(x):=x\,\sigma_N(\theta)(\log x) = \sum_{k=-N}^N \left(1-\frac{|k|}{N}\right)\hat{\theta}(k)\phi_k(x)\]
converges to $f$ in norm $\|\cdot\|_{_{\Cdot^\C}}.$

\medskip

To conclude it remains to verify that $\hat\theta(k)=\langle\nu_k,f\rangle$.
Since $\int_0^\infty x\,\U(x)\d x=1$ by definition and $\lambda_k\neq\lambda_l$ when $k\neq l$,
we have that $\langle\nu_k,\phi_l\rangle=\delta_{kl}$, the Kronecker delta function.
We deduce that for any positive integer $N$
\[\langle\nu_k,F_N(f)\rangle=\left\{\begin{array}{l}
0 \text{ if } N < |k|, \vspace{2mm}\\
\big(1-\frac{|k|}{N}\big)\hat{\theta}(k) \text{ otherwise}.
\end{array}\right.\]
As a consequence for all $N\geq|k|$ we have
\[|\langle\nu_k,f\rangle - \hat{\theta}(k)| \leq \left\|f - F_N(f)\right\|_{\dot\B} + \frac{|k|}{N}\left\|f\right\|_{\dot\B}\]
and this gives the desired identity by letting $N$ tend to infinity.

\end{proof}

\

We have shown in the proof of Lemma~\ref{lem:X} that the Fejér sums $F_N$ can be extended to $\Cdot^\C$ by setting
\[F_N(f)=\sum_{k=-N}^N \left(1-\frac{|k|}{N}\right)\langle\nu_k,f\rangle\phi_k.\]
The limit when $N\to\infty$, provided it exists, is a good candidate for defining a relevant projection on~$X$.
Using Lemma~\ref{lem:GRE} we prove in the following theorem that the sequence $(F_N(f))_{n\geq1}$ converges in $X$ for any $f\in \Cr^1_c(0,\infty)$,
and that the limit extends into a linear operator $\Cdot_0\to X$ which provides the asymptotic behavior of $(M_t)_{t\geq0}$ on $\Cr_0$.

\begin{theorem}\label{thm}
For any $f\in \Cr^1_c(0,\infty)$ and any $t\geq0$ the sequence
\[F_N(\e^{-t}M_tf)=\sum_{k=-N}^N\Big(1-\frac{|k|}{N}\Big)\langle\nu_k,f\rangle\e^{\frac{2i\pi k}{\log2}t}\phi_k\]
converges in $\Cdot$ and the limit $R_tf$ defines a $\log2$-periodic family of bounded linear operators $R_t:\Cdot_0\to X\cap \Cdot.$
Moreover for all $f\in \Cdot_0$
\[\e^{-t}M_tf-R_tf\xrightarrow[t\to\infty]{}0\]
locally uniformly on $(0,\infty).$
\end{theorem}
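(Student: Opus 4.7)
The overall plan is to express the Fej\'er sum as a classical Fej\'er partial sum of a periodized function, deduce $\Cdot$-convergence and the properties of $R_t$, extend $R_t$ to $\Cdot_0$ by density, and derive the asymptotic convergence from the general relative entropy of Lemma~\ref{lem:GRE} combined with an $\omega$-limit argument. The starting point is the identity $\nu_k(M_tf)=\e^{\lambda_k t}\,\nu_k(f)$ for $f\in\Cr^1_c(0,\infty)$: differentiating under the integral via $\p_tM_tf=M_t\A f$ (Corollary~\ref{cor:Mtdroite}) reduces this to $\nu_k(\A g)=\lambda_k\,\nu_k(g)$, which follows from integrating by parts against the eigenvalue equation~\eqref{Perron} for $\U_k$, the boundary terms vanishing thanks to $x^r\U\in L^\infty$ for every $r\in\R$. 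Substituting and factoring $\phi_k(x)=x\,\e^{\frac{2ik\pi}{\log 2}\log x}$ yields
\[F_N(\e^{-t}M_tf)(x)=x\,\sigma_N(G)(t+\log x),\]
where $\sigma_N$ is the standard Fej\'er partial sum and $G$ is the $\log 2$-periodization $G(y):=\log 2\,\sum_{n\in\Z}h(y+n\log 2)$ of $h(y):=f(\e^y)\,\e^y\,\U(\e^y)$; a change of variable confirms $\hat G(k)=\nu_k(f)$.

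When $f\in\Cr^1_c$, $h$ has compact support, so $G$ is continuous and $\log 2$-periodic, and Fej\'er's theorem gives $\sigma_N(G)\to G$ uniformly on $\R$; hence $F_N(\e^{-t}M_tf)\to R_tf$ in $\Cdot$ with $R_tf(x):=x\,G(t+\log x)$. Periodicity of $G$ immediately yields $R_{t+\log 2}=R_t$ and $R_tf(2x)=2R_tf(x)$, placing $R_tf$ in $X\cap\Cdot$. The norm bound $\|R_tf\|_{\dot\B}=\sup_\R|G|\leq C\|f\|_{\dot\B}$ comes from the uniform finiteness of the $\log 2$-periodization of $x\mapsto x^2\,\U(x)$, guaranteed by $\U\in L^\infty$ (so $x^2\U(x)=O(x^2)$ as $x\to 0^+$) and $x^3\U\in L^\infty$ (so $x^2\U(x)=O(1/x)$ as $x\to\infty$), both provided by the theorem of~\cite{DG} recalled above. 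Density of $\Cr^1_c$ in $(\Cdot_0,\|\cdot\|_{\dot\B})$ then extends $R_t$ to a bounded operator $\Cdot_0\to X\cap\Cdot$ with operator norm uniform in $t$.

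For the locally uniform convergence of $\e^{-t}M_tf-R_tf$ to $0$, the uniform operator bounds $\|\e^{-t}M_t\|_{\dot\B\to\dot\B}\leq 1$ (Corollary~\ref{cor:Mtdroite}) and $\|R_t\|_{\dot\B\to\dot\B}\leq C$ allow a density reduction to $f\in\Cr^1_c$. Set $v_t:=\e^{-t}M_tf-R_tf$. Both terms solve $\p_t\psi=(\A-I)\psi$ --- for $R_tf$ this uses $R_tf(x/2)=R_tf(x)/2$ to cancel the nonlocal term --- so $v_t=\e^{-t}M_tv_0$ with $v_0:=f-R_0f$. A Poisson-summation-type computation using the cancellation $\int_0^\infty x\,\U(x)\,x^{2im\pi/\log 2}\,\d x=\delta_{m,0}$ (equivalent to the biorthogonality $\nu_k(\phi_l)=\delta_{kl}$) gives $\nu_k(R_0f)=\nu_k(f)$, hence $\nu_k(v_t)\equiv 0$. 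Applying Lemma~\ref{lem:GRE} with $H(y)=y^2$ makes $t\mapsto\int_0^\infty x\,\U(v_t/x)^2\,\d x$ non-increasing with dissipation $D^H[v_t]=\int_0^\infty xB\U\,(v_t(x)/x-v_t(x/2)/(x/2))^2\,\d x$, so $\int_0^\infty D^H[v_t]\,\d t\leq\|v_0\|_{\dot\B}^2<\infty$.

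Finally, pick $t_n\to\infty$ with $D^H[v_{t_n}]\to 0$. The identity $x\,\p_xv_t=\p_tv_t+v_t-B(x)\bigl(2v_t(x/2)-v_t(x)\bigr)$, together with the uniform bound $\|\p_tv_t\|_{\dot\B}\leq\|v_t\|_{\dot\B}+\|\A v_0\|_{\dot\B}<\infty$ (valid because $f\in\Cr^1_c$ gives $\A f\in\dot\B$, and under the regularity of $\U$ inherited from the eigenvalue ODE also $\A R_0f\in\dot\B$) and the local boundedness of $B$, furnishes uniform $\Cr^1$-bounds on $v_t$ over any compact $[a,b]\subset(0,\infty)$. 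Arzel\`a-Ascoli and a diagonal extraction yield a subsubsequence $v_{t_{n_j}}\to w$ locally uniformly, and Fatou applied to the dissipation forces $w(x)/x=w(x/2)/(x/2)$ on $\supp B$. Extending this identity from $\supp B$ to all of $(0,\infty)$ --- by running the same argument on the time-shifted sequences $v_{t_{n_j}+s}$ for $s\geq 0$, whose cluster points evolve under the dynamics and whose forward flow $x\mapsto x\e^s$ eventually sweeps every scale through $\supp B$ --- places $w\in X$. Continuity of $\nu_k$ under locally uniform convergence of $\dot\B$-bounded families (dominated convergence with majorant $x\,\U$) then forces $\nu_k(w)=0$ for every $k$, and Lemma~\ref{lem:X} concludes $w=0$. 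Since every convergent subsequence limits to $0$, $v_t\to 0$ locally uniformly. The main obstacle is precisely this globalization of the periodicity relation $w(2x)=2w(x)$ from $\supp B$ to $(0,\infty)$ when $b>0$: it genuinely requires the transport dynamics and is the most delicate step of the argument.
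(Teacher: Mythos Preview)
Your construction of $R_t$ in the first half is genuinely different from the paper's and is correct and elegant: instead of extracting $R_tf$ as a subsequential limit of the dynamics and then invoking Lemma~\ref{lem:X}, you recognise $F_N(\e^{-t}M_tf)$ directly as $x\,\sigma_N(G)(t+\log x)$ for the explicit $\log2$-periodic function $G$ obtained by periodising $y\mapsto f(\e^y)\e^y\U(\e^y)$, and conclude by the classical Fej\'er theorem. This also gives the boundedness of $R_t$ from concrete decay of $x^2\U(x)$, whereas the paper obtains it from the contraction $\|\e^{-t}M_t\|_{\dot\B\to\dot\B}\leq1$. Both routes work; yours is more constructive.

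The second half, however, has a genuine gap. You pick a \emph{specific} sequence $t_n\to\infty$ with $D^H[v_{t_n}]\to0$, extract a locally uniformly convergent subsubsequence $v_{t_{n_j}}\to w$, argue $w\in X$ and $\nu_k(w)=0$, and conclude $w=0$. The sentence ``Since every convergent subsequence limits to $0$, $v_t\to0$ locally uniformly'' does not follow: you have only treated subsequences of your special $(t_n)$, not arbitrary sequences $s_m\to\infty$, and for an arbitrary sequence the pointwise dissipation $D^H[v_{s_m}]$ need not vanish. One cannot close the argument by passing through the entropy either, because when $b>0$ one has $\supp\U=[b/2,\infty)$, so $\mathcal H[v_t]\to0$ does not force $v_t\to0$ on $(0,b/2)$. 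The paper avoids this by sampling along $n\log2$ and extracting a limit $h(t,x)$ as a function of \emph{two} variables: the time-integrated dissipation $\int_0^TD^2[\e^{-t-n\log2}M_{t+n\log2}f]\,\d t$ vanishes along the \emph{full} sequence $n\to\infty$, and the limit is then identified uniquely (via Lemma~\ref{lem:X} and $\nu_k(h(t,\cdot))=\e^{2ik\pi t/\log2}\nu_k(f)$), which is what upgrades subsequential to full convergence.

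Relatedly, your globalisation of $w(2x)=2w(x)$ from $\supp B$ to all of $(0,\infty)$ is not carried out. The suggestion to ``run the same argument on $v_{t_{n_j}+s}$'' requires $D^H[v_{t_{n_j}+s}]\to0$ for each $s$, which you have not established; only the \emph{time-integrated} dissipation over $[0,s]$ vanishes (by monotonicity of the entropy). In the paper's setup the limit $h$ is automatically $\log2$-periodic in $t$ because the sampling times are $n\log2$; combined with the pure transport $\partial_tg=x\partial_xg$ on $\{x<b\}$ (where $g=h/x$) this periodicity is exactly what propagates the relation $g(t,x)=g(t,x/2)$ from $x\geq b$ down to all $x>0$. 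Your single-time limit $w$ has no such periodicity, so the characteristic argument does not close. To repair the proof you should, as the paper does, extract a limit in $(t,x)$ along the arithmetic progression $n\log2$ and use the resulting time-periodicity.
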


\smallskip

Notice that $R_0$ is actually a projector from $\Cdot_0\oplus X$ onto $X$.

\smallskip

\begin{proof}
We know from Corollary~\ref{cor:Mtdroite} that $\e^{-t}M_t$ is a contraction for $\|\cdot\|_{\dot\B}$.
Let $f\in \Cr^1_c(0,\infty).$
We have $\mathcal A f\in \Cdot$ and so $\partial_t(\e^{-t}M_tf)=M_t(\mathcal Af-f)$ is bounded in time in $\Cdot.$
Since $x\partial_xM_tf(x)=\partial_tM_tf(x)-B(x)(2M_tf(x/2)-M_tf(x))$ and $B$ is locally bounded
we deduce that $\e^{-t}\partial_xM_tf$ is locally bounded on $(0,\infty)$ uniformly in $t\geq0$.
So the Arzela-Ascoli theorem ensures that there exists a subsequence of $(\e^{-t-n\log2}M_{t+n\log2}f(x))_{n\geq0}$ which converges locally uniformly on $[0,\infty)\times(0,\infty)$ to a limit $h(t,x)$, with $h(t,\cdot)\in \Cdot$ for all $t\geq0$.
We now use Lemma~\ref{lem:GRE} to identify this limit.
The dissipation of entropy for the convex function $H(x) = x^2,$ denoted $D^2,$ reads
\[D^2[f]=\intRpe xB(x)\mathcal{U}(x)\left|\frac{f(x/2)}{x/2} - \frac{f(x)}{x}\right|^2\d x.\]
The general relative entropy inequality in Lemma~\ref{lem:GRE} guarantees that
\[\intRpe D^2[\e^{-t}M_tf]\d t<+\infty\]
and as a consequence, for all $T>0$,
\[\int_0^T D^2[\e^{-t-n\log 2}M_{t+n\log 2}f]\d t = \int_{n\log 2}^{T+n\log 2} D^2[\e^{-t}M_tf]\d t \xrightarrow[n\to\infty]{} 0.\]
From the Cauchy-Schwarz inequality we deduce that
\[\frac{\e^{-t-n\log 2}M_{t+n\log 2}f(x/2)}{x/2}-\frac{\e^{-t-n\log 2}M_{t+n\log 2}f(x)}{x}\to 0\]
in the distributional sense on $(0,\infty)^2$,
and since $\e^{-t-n\log 2}M_{t+n\log 2}f(x)$ converges locally uniformly to $h(t,x)$ we get that for all $t\geq0$ and $x>0$
\[\frac{h(t,x/2)}{x/2}-\frac{h(t,x)}{x} = 0.\]
This means that $h(t,\cdot)\in X$ for all $t\geq0$, and Lemma~\ref{lem:X} then ensures that
\[h(t,\cdot)=\lim_{N\to\infty}\sum_{k=-N}^N\Big(1-\frac{|k|}{N}\Big)\langle\nu_k,h(t,\cdot)\rangle\phi_k.\]
Since by definition of $\U_k$ we have $\nu_kM_t=\e^{\lambda_kt}\nu_k$, the dominated convergence theorem yields
\[\langle\nu_k,h(t,\cdot)\rangle=\lim_{n\to\infty}\e^{-t-n\log2}\langle\nu_kM_{t+n\log2},f\rangle=\e^{\frac{2ik\pi}{\log2}t}\langle\nu_k,f\rangle\]
and so
\[h(t,\cdot)=\lim_{N\to\infty}\sum_{k=-N}^N\Big(1-\frac{|k|}{N}\Big)\langle\nu_k,f\rangle\e^{\frac{2ik\pi}{\log2}t}\phi_k=\lim_{N\to\infty}F_N(\e^{-t}M_tf).\]
This guarantees that ${(F_N(M_tf))}_{N\geq1}$ is convergent in $\Cdot$.
Its limit denoted by $R_tf$ clearly defines a linear operator $R_t:\Cr^1_c(0,\infty)\to X\cap \Cdot$.
Moreover by local uniform convergence of $\e^{-t-n\log2}M_{t+n\log 2}f$ to $R_tf$ we get that
\[\left\|R_tf\right\|_{\dot\B}\leq\limsup_{n\to\infty}\left\|\e^{-t-n\log 2}M_{t+n\log 2}f\right\|_{\dot\B}\leq\left\|f\right\|_{\dot\B}.\]
Thus $R_t$ is bounded and it extends uniquely to a contraction $\Cdot_0\to X\cap \Cdot$.
The local uniform convergence of $\e^{-t-n\log 2}M_{t+n\log 2}f(x)$ to $R_tf(x)$ for $f\in \Cr^1_c(0,\infty)$ also guarantees the local uniform convergence of $\e^{-t}M_tf-R_tf$ to zero when $t\to+\infty.$
Indeed, letting $K$ be a compact set of $(0,\infty)$ and defining for all $t\geq0$ the integer part $n:=\big\lfloor \frac{t}{\log 2}\big\rfloor$, so that $t':=t-k\log 2\in [0,\log 2]$,
one has
\begin{align*}
&\sup_{x\in K}|\e^{-t}M_tf(x)-R_tf(x)| \\
&= \sup_{x\in K}|\e^{-(n\log 2 + t')}M_{n\log 2 + t'}f(x)-R_{t'}f(x)|\\
&\leq \sup_{x\in K}\sup_{s\in[0,\log 2]}|\e^{-(n\log 2 + s)}M_{n\log 2 + s}f(x)-R_sf(x)|.
\end{align*}
This convergence extends to any $f\in \Cdot_0$ by density.
\end{proof}

Due to the Riesz representation $\dot\M\simeq \Cdot_0'$, we can define a $\log2$-periodic contraction semigroup $R_t$ on $\dot\M$ by setting
for all $\mu\in\dot\M$ and all $f\in \Cdot_0$
\[\langle\mu R_t,f\rangle:=\langle\mu,R_tf\rangle.\]
Theorem~\ref{thm} then yields the weak-* convergence result in Theorem~\ref{thm:main} since $\rho_t=\mu_0R_t$.
The following theorem readily implies the uniform exponential convergence in weighted total variation norm.

\begin{theorem}\label{thm:conv_norm}
Let $r_1,r_2\in\R$ such that $r_1<1<r_2$ and define $w(x)=x^{r_1}+x^{r_2}$.
Then $R_t$ is a bounded endomorphism of $\Cr(w)$ for any $t\geq0$,
and there exist explicit constants $C\geq1$ and $a>0$ such that for all $f\in\B(w)$ and all $t\geq0$
\[\left\|\e^{-t}M_tf-R_tf\right\|_{\B(w)}\leq C\e^{-at}\left\|f-R_0f\right\|_{\B(w)}.\]
\end{theorem}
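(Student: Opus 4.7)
The key algebraic identity is $R_t=\tilde M_tR_0$ with $\tilde M_t:=\e^{-t}M_t$, which follows from $\tilde M_t\phi_k=\e^{\frac{2ik\pi}{\log2}t}\phi_k$ and the Fejér formula of Theorem~\ref{thm}.
Thus $\tilde M_tf-R_tf=\tilde M_t(f-R_0f)$, and writing $g:=f-R_0f$ the inequality reduces to a uniform contraction estimate $\|\tilde M_tg\|_{\B(w)}\leq C\e^{-at}\|g\|_{\B(w)}$ on the subspace of functions $g$ satisfying $\nu_k(g)=0$ for all $k\in\Z$.
The extension of $R_0$ into a bounded endomorphism of $\Cr(w)$ (which also guarantees $R_t=\tilde M_tR_0$ makes sense on $\Cr(w)$ for every $t\geq0$) is obtained by noting that $\nu_k$ extends continuously to $\B(w)$ thanks to the integrability of $w\U$ provided by the Perron theorem, and that $\|\phi_k\|_{\Cr(w)}$ is uniformly bounded in $k$; the convergence of the Fejér sums in $\Cr(w)$ is then a replay of the arguments of Theorem~\ref{thm}.

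\textbf{Sub-lattice decomposition and Harris's theorem.}
A direct application of Harris's ergodic theorem is prevented by the infinite-dimensional set of fixed points of $\tilde M_{\log2}$, namely the subspace $X$.
The crucial observation, already made in the introduction, is that a Dirac mass at $x$ propagates at times $n\log2$ only through the geometric lattice $L_x:=\{2^jx:j\in\Z\}$; by duality, $M_{n\log2}f(x)$ depends only on the values of $f$ on $L_x$.
Consequently the discrete-time dynamics $(\tilde M_{n\log2})_{n\geq0}$ splits into a family of independent sub-problems $(K_y)_{y\in[1,2)}$, each acting on the countable lattice $L_y:=\{y2^k:k\in\Z\}$, and after conjugation by the dual eigenfunction $\phi(x)=x$ every $K_y$ becomes a genuine Markov operator whose unique invariant probability measure coincides with the restriction of $R_0$ to $L_y$.
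Harris's theorem, in the quantitative Hairer--Mattingly formulation providing explicit constants, is therefore applicable on each sub-problem $K_y$.

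\textbf{Verification of the Harris hypotheses and conclusion.}
On each $L_y$, uniformly in $y\in[1,2)$, we aim to establish a Foster--Lyapunov inequality $K_yV\leq\gamma V+L\1_{\{V\leq R\}}$ with $\gamma<1$ for the weight $V(x):=w(x)/\phi(x)=x^{r_1-1}+x^{r_2-1}$, together with a minorisation $K_y^{n_0}(z,\cdot)\geq\eta\,\pi_y$ on the sub-level set $\{V\leq R\}\cap L_y$ for some integer $n_0\geq1$ and some $\eta>0$ independent of $y$.
The drift estimate exploits the two-sided bounds on $B$ from~\eqref{hyp:B}: the condition $r_2>1$ combined with the superlinear growth $B(x)\geq K_1x^{\gamma_1}$ at infinity produces contraction of the $x^{r_2-1}$ branch of $V$ via the fragmentation term, while $r_1<1$ combined with $B(x)\leq K_0x^{\gamma_0}$ near zero produces contraction of the $x^{r_1-1}$ branch via the transport term.
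The minorisation follows from the strict positivity of $B$ on $[b,\infty)$, which guarantees that fragmentation redistributes mass onto a common finite subset of every $L_y$ within a bounded number of $\log2$-periods with a uniform positive probability.
Inserting these uniform bounds into the Hairer--Mattingly theorem yields constants $C_0\geq1$ and $a_0>0$, independent of $y$, such that $\|\tilde M_{n\log2}g\|_{\B(w)}\leq C_0\e^{-a_0n\log2}\|g\|_{\B(w)}$ whenever $R_0g=0$; writing $t=n\log2+r$ with $r\in[0,\log2)$ and using the boundedness of $\tilde M_r$ on $\B(w)$ delivers the desired continuous-time inequality.
The main obstacle is the uniform, fully quantitative derivation of the Foster--Lyapunov drift: one has to track precisely the combined effect of transport and fragmentation on powers $x^r$ with $r\neq1$ over a full period $\log2$, so that $\gamma<1$ and the ensuing constants $C_0,a_0$ can be computed explicitly in terms of $r_1,r_2$ and the exponents and constants of~\eqref{hyp:B}.
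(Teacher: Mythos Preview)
Your approach is essentially the paper's: conjugate by $\phi(x)=x$ to obtain a Markov semigroup, restrict the discrete-time operator $P_{\log2}$ to each lattice $\{2^mx:m\in\Z\}$, verify the Hairer--Mattingly hypotheses with Lyapunov function $V=w/\phi=x^{r_1-1}+x^{r_2-1}$ and a minorisation obtained by iterating the Duhamel formula, then pass from times $n\log2$ to all $t\geq0$ via the boundedness of $\tilde M_r$ on $\B(w)$. One organisational caveat: your argument that $R_0$ extends to a bounded endomorphism of $\Cr(w)$ by ``replaying the arguments of Theorem~\ref{thm}'' does not go through as written, since that proof rests on the contraction $\|\e^{-t}M_t\|_{\dot\B}\leq1$ coming from the exact identity $M_t\phi=\e^t\phi$, and there is no analogous identity for $w$; the paper instead deduces $\|R_0f\|_{\B(w)}\leq K\|f\|_{\B(w)}$ as a byproduct of the Lyapunov estimate $\|\e^{-t}M_tf\|_{\B(w)}\leq(\e^{-\omega t}+K)\|f\|_{\B(w)}$, so you should establish the drift condition first and then read off the boundedness of $R_0$ (and the identification of the Harris invariant measure with $R_0$ via Theorem~\ref{thm}) afterwards.
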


The proof of this result relies on Harris's ergodic theorem that we apply on discrete time and discrete size sub-problems.
Before giving the details, we thus start by recalling Harris's theorem in a form which can be readily deduced from~\cite[Theorem~1.3]{HairerMattingly}.
Let $\X$ be a measurable space and $P$ a Markov operator on this state space, that is a positive operator in $\B(\X)$ verifying $P\mathbf1=\mathbf1$,
and suppose that this operator $P$ satisfies the two following assumptions:
\begin{enumerate}
\item[(A1)] There exist a function $V:\X\to[1,\infty)$ and constants $K\geq0$ and $\gamma\in(0,1)$ such that
\[PV(x)\leq\gamma V(x)+K\]
for all $x\in\X$.
\item[(A2)] There exist a constant $\alpha\in(0,1)$ and a probability measure $\nu$ such that
\[Pf(x)\geq \alpha\langle\nu,f\rangle\]
for all functions $f\in\B_+(\X)$ and all $x\in\mathcal S:=\{x\in\X\,:\,V(x)\leq R\}$ for some $R>2K/(1-\gamma)$, where $K$ and $\gamma$ are the constants from Assumption~(A1).
\end{enumerate}
Note that the only difference between these assumptions and ~\cite[Assumption~1 and Assumption~2]{HairerMattingly} is that we suppose here that $V$ takes values larger than $1$.

\begin{theorem}[Harris's theorem \cite{HairerMattingly}]\label{thm:HM}
If Assumptions~(A1) and~(A2) hold, then $P$ admits a unique invariant measure $\mu_\star$ and there exist constants $C>0$ and $\varrho\in(0,1)$ such that the bound
\[\left\| P^nf-\langle\mu_\star,f\rangle\right\|_{\B(\X,V)}\leq C\varrho^n\left\| f-\langle\mu_\star,f\rangle\right\|_{\B(\X,V)}\]
holds for every measurable function $f\in\B(\X,V)$.
Moreover, the constants $C$ and $\varrho$ can be chosen in an explicit way in terms of the constants $\alpha,\gamma,K$ and $R$ of Assumptions~(A1) and~(A2).
\end{theorem}

We are now in position to prove Theorem~\ref{thm:conv_norm}.

\begin{proof}
The semigroup $(M_t)_{t\geq0}$ is not a family of Markov operators
so we consider the rescaled semigroup $(P_t)_{t\geq0}$ defined on $\B(0,\infty)$ by
\[P_tf(x):=\frac{M_t(\phi_0 f)(x)}{\e^t\phi(x)}.\]
Since $\phi_0(x)=x$ verifies $M_t\phi_0=\e^t\phi_0$, the family $(P_t)_{t\geq0}$ is clearly a semigroup of Markov operators.
However, since the long time behavior of $(P_t)_{t\geq0}$ consists in persistent oscillations, this continuous time semigroup is not ergodic.
The idea is to apply Harris's tehorem to a discrete time semigroup on a discrete state space.
Let us fix $x>0$ until the end of the proof, and define
\[\X_x:=\big\{y\in(0,\infty): \exists m\in\Z, y=2^mx\big\}.\]
The left action of $P_{\log 2}$ defines an operator on the measures on $\X_x$.
Let us give a rigorous proof of this claim.
It is easily seen in the proof of Proposition~\ref{lm:fixedpoint} that if $f$ vanishes on $\X_z$, then $\Gamma$ leaves invariant the set of functions $g$ such that $g(t,y)=0$ for all $t\geq0$ and $y\in \X_{\e^{-t}z}$.
It implies that the fixed point $M_tf$ belongs to this set, and consequently so does $P_tf$.
In other words, if $y\in \X_{\e^{-t}z}$ then $\supp(\delta_yP_t)\subset\X_z$.
Applying this to $z=x\e^t$ ensures that if $\supp\mu\subset\X_x$ then $\supp(\mu P_t)\subset\X_{\e^tx}$.
Since $\X_{2x}=\X_x$ we deduce that $P_{\log2}$ leaves invariant the elements of $\dot\M$ with support included in $\X_x$.

\medskip

Let us denote by $P$ the operator $P_{\log2}$ seen as a Markov operator on the state space $\X_x$.
We will prove that $P^n$ satisfies Assumptions~(A1) and~(A2) above for some positive integer $n$ and the Lyapunov function $V(x)=x^{q_1}+x^{q_2}$ with $q_1<0<q_2$.
To do so we study the continuous time semigroup $(P_t)_{t\geq0}$.
Its infinitesimal generator is given by
\[\widetilde\A f(x)=xf'(x)+B(x)\big(f(x/2)-f(x)\big)\]
and it satisfies the Duhamel formula
\begin{equation}\label{eq:Duhamel_Pt}
P_tf(x)=f(x\e^t)\e^{-\int_0^t B(x\e^s)\d s} + \int_0^t B(x\e^\tau)\e^{-\int_0^\tau B(x\e^s)\d s}P_{t-\tau}f\Big(\frac{x\e^\tau}{2}\Big)\d \tau
\end{equation}
which is the same as in Proposition~\ref{lm:fixedpoint} but without the factor $2$ before the integral.
We easily check that
\[\widetilde\A V(x)=\big[q_1+(2^{-q_1}-1)B(x)\big]x^{q_1}+\big[q_2+(2^{-q_2}-1)B(x)\big]x^{q_2}.\]
Since $B$ is continuous, $B(x)\to0$ at $x=0$ and $B(x)\to+\infty$ as $x\to+\infty$, we see that for any $\omega\in(0,-q_1)$ the continuous function $\widetilde\A V+\omega V$ is bounded from above, or in other words there exists a constant $K>0$ such that
\[\widetilde\A V\leq -\omega (V-K).\]
Since $\partial_tP_tV=P_t\tilde\A V$ and $\widetilde \A K=0$ we deduce from Gr\"onwall's lemma that
\[P_tV\leq \e^{-\omega t}V+K\]
for all $t\geq0$.
In particular, since $V \geq 1$, this inequality also ensures that $P_tV\leq(\e^{-\omega t}+K)V$ and consequently $\B(V)$ is invariant under $P_t$.
In terms of the original semigroup $(M_t)_{t\geq0}$ this yields that
\begin{equation}\label{eq:boundBw}
\left\|\e^{-t}M_tf\right\|_{\B(w)}\leq(\e^{-\omega t}+K) \left\|f\right\|_{\B(w)}
\end{equation}
for all $t\geq0$ and $f\in\B(w)$ with $w(x)=xV(x)=x^{1+q_1}+x^{1+q_2}$.
As a by-product, it guarantees that $R_t$ is a bounded endomorphism of $\Cr(w)$ since
\[\left\|R_tf\right\|_{\B(w)}\leq\limsup_{n\to\infty}\left\|\e^{-t-n\log 2}M_{t+n\log 2}f\right\|_{\B(w)}\leq K\left\|f\right\|_{\B(w)}.\]
We also deduce that for all integer $n\geq1$ and all $y\in\X_x$
\[P^nV(y)=P_{n\log2}V(y)\leq \gamma V(y)+K\]
with $\gamma=\e^{-\omega\log2}\in(0,1)$.
Assumption~(A1) is thus satisfied by $P^n$ for any integer $n\geq1$, with constants which do not depend on $n$.
We will now prove that Assumption~(A2) is verified for some $n\geq1$ on the sub-level set $\mathcal S=\{y\in\X_x: V(y)\leq R\}$ for some $R>2K/(1-\gamma)$.

\medskip

Fix $R>2K/(1-\gamma)$ and let $\xi_1,\xi_2\in\X_x$ be such that $\mathcal S\subset[\xi_1,\xi_2]$ and $\xi_2> b_1$, where $b_1$ is defined in~\eqref{hyp:B}.
Define $\overline{\mathcal S}:=[\xi_1,\xi_2]\cap\X_x\supset\mathcal S$ and let us index this set by $\xi_1=x_0<x_1<\cdots<x_{n_0}=\xi_2$, meaning that $\overline{\mathcal S}=\{x_0,\cdots,x_{n_0}\}$.
We prove by induction on $n$ that for all $n\in\{0,\cdots,n_0\}$, there exists $c_n>0$ such that for all bounded function $f:\X_x\to[0,\infty)$ and all $y\in\overline{\mathcal S}$
\begin{equation}\label{eq:IH}
P^nf(y)\geq c_n f(\min(2^ny,x_{n_0})).
\end{equation}
It is trivially satisfied for $n=0$ with $c_0=1$.
Assume now that~\eqref{eq:IH} is verified for some $n\in\{0,\cdots,n_0-1\}$.
Iterating once the Duhamel formula~\eqref{eq:Duhamel_Pt} and taking $t=\log2$ we get that for all $f:\X_x\to[0,\infty)$ and all $y\in\overline{\mathcal S}$
\[Pf(y)\geq \eta f(2y) + \eta^2\bigg(\int_0^{\log2} B(y\e^\tau)\d\tau\bigg)f(y)\]
with $\eta:=\exp\big(-\int_{\xi_1}^{2\xi_2}B\big)>0$.
Applying this inequality to $P^nf$ instead of $f$ yields that for all bounded function $f:\X_x\to[0,\infty)$ and all $y\in\overline{\mathcal S}$
\[P^{n+1}f(y)\geq \eta P^nf(2y) + \eta^2\bigg(\int_0^{\log2} B(y\e^\tau)\d\tau\bigg)P^nf(y).\]
If $y<x_{n_0}$ then we get by induction hypothesis~\eqref{eq:IH} that
\[P^{n+1}f(y)\geq \eta P_nf(2y)\geq c_n\eta f(\min(2^{n+1}y,x_{n_0})).\]
If $y=x_{n_0}$ we use that $x_{n_0}=\xi_2>b_1$ to get
\[P^{n+1}f(x_{n_0})\geq\eta^2\bigg(\!\int_0^{\log2} B(\xi_2\,\e^\tau)\d\tau\!\bigg)P^nf(x_{n_0})
\geq\eta^2K_1\xi_2^{\gamma_1}\frac{2^{\gamma_1}-1}{\gamma_1}P^nf(x_{n_0}).\]
We can thus take $c_{n+1}=\min(c_n\eta,\eta^2K_1\xi_2^{\gamma_1}(2^{\gamma_1}-1)/\gamma_1)>0$.
Now that~\eqref{eq:IH} is proved for all $n\in\{0,\cdots,n_0\}$ we take $n=n_0$ and obtain that
\[P^{n_0}f(y)\geq c_{n_0}\langle\delta_{x_{n_0}},f\rangle\]
for all $y\in\mathcal S$ and all bounded $f:\X_x\to[0,\infty)$,
which is Assumption~(A2) with $\alpha=c_{n_0}$ and $\nu=\delta_{x_{n_0}}$.

\medskip

We are in position to apply the Harris's ergodic theorem.
We get the existence of an invariant measure $\mu^x$ on $\X_x$, which integrate $V$, and constants $C\geq1$ and $\varrho\in(0,1)$ such that for all $f\in\B(\X_x,V)$  and all $m\in\N$
\[\sup_{y\in \X_x}\frac{|P^{mn_0}f(y)-\mu^x(f)|}{V(y)}\leq C\varrho^m\sup_{y\in \X_x}\frac{|f(y)-\mu^x(f)|}{V(y)}.\]
Since the constants $\alpha,\gamma,K$ and $R$ are independent of $x$ in our calculations above, Theorem~\ref{thm:HM} ensures that the constants $C$ and $\varrho$ are too.
In particular it implies that $P^{mn_0}f(y)$ converges to $\mu^x(f)$ as $m\to\infty$ for all $y\in\X_x$.
But, defining $t_0=n_0\log2$, we know from Theorem~\ref{thm} that $P_{mt_0}f(y)\to R_0(\phi f)(y)/y$ for all $f\in\B(V)$ as $m\to\infty$.
So we obtain, taking $y=x$ in the left hand side, that for all $f\in\B(V)$ and all $m\in\N$
\[\frac{|P_{m t_0}f(x)-R_0(\phi_0 f)(x)/x|}{V(x)}\leq C\varrho^m\sup_{y\in \X_x}\frac{|f(y)-R_0(\phi_0 f)(y)/y|}{V(y)}.\]
Still using the function $w(x)=xV(x)$, this yields in terms of $(M_t)_{t\geq0}$ that for all $f\in\B(w)$ and all $m\in\N$
\[\frac{|\e^{-mt_0}M_{m t_0}f(x)-R_0f(x)|}{w(x)}\leq C\varrho^m\sup_{y\in(0,\infty)}\frac{|f(y)-R_0f(y)|}{w(y)}.\]
Since we chose any $x\in(0,\infty)$ and the constants $C$ and $\varrho$ are independent of $x$, we finally proved that for all $f\in\B(w)$ and all $m\in\N$
\[\left\|\e^{-mt_0}M_{m t_0}f-R_0f\right\|_{\B(w)}\leq C\varrho^m\left\|f-R_0f\right\|_{\B(w)}.\]
As $R_{mt_0}=R_0$ by periodicity, this gives the result of Theorem~\ref{thm:conv_norm} for discrete times $t=mt_0$.
It easily extends to continuous times due to the bound~\eqref{eq:boundBw}.

\end{proof}

We finish by giving consequences of Theorems~\ref{thm} and~\ref{thm:conv_norm} in terms of mean ergodicity.
Since the limit is $\log2$-periodic we expect by taking the mean in time of the semigroup to get alignment on the Perron eigenfunction.
The results are given in the following corollary for the right semigroup, but again they can readily be transposed to the left action on measures by duality.

\begin{coro}\label{coro}
For any $f\in \Cdot_0$ the two mappings
\[t\mapsto\frac{1}{\log 2}\int_t^{t+\log 2} \e^{-s}M_sf\, \d s \qquad \text{ and } \qquad t\mapsto\frac{1}{t}\int_0^t \e^{-s}M_sf \,\d s\]
converge locally uniformly to $\nu_0 (f)\phi_0$ when $t$ tends to infinity.
Moreover if $w(x)=x^{r_1}+x^{r_2}$ with $r_1<1<r_2$, then there exist constants $C\geq1$ and $a>0$ such that for all $f\in\B(w)$
\[\bigg\|\frac{1}{\log 2}\int_t^{t+\log 2} \e^{-s}M_sf\, \d s-\langle\nu_0,f\rangle\phi_0\bigg\|_{\B(w)}\leq C\e^{-at}\left\|f-\langle\nu_0,f\rangle\phi_0\right\|_{\B(w)}\]
and
\[\bigg\|\frac{1}{t}\int_0^t \e^{-s}M_sf \,\d s-\langle\nu_0,f\rangle\phi_0\bigg\|_{\B(w)}\leq \frac{C}{t}\left\|f-\langle\nu_0,f\rangle\phi_0\right\|_{\B(w)}.\]
\end{coro}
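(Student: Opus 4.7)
The plan is to exploit the $\log 2$-periodicity of $(R_t)_{t\geq 0}$ in order to reduce both convergences to the limits already established in Theorems~\ref{thm} and~\ref{thm:conv_norm}. The pivotal identity I would prove first is
\[
\frac{1}{\log 2}\int_t^{t+\log 2} R_s f \,\d s = \nu_0(f)\,\phi_0 \qquad \text{for all } f\in \Cdot_0 \text{ and } t\geq 0.
\]
By $\log 2$-periodicity this mean is independent of $t$, so it suffices to prove it at $t=0$. For $f\in \Cr^1_c(0,\infty)$, I would exchange the integral with the Fejér-sum limit defining $R_s f$ in Theorem~\ref{thm}; the orthogonality relation $\int_0^{\log 2}\e^{2ik\pi s/\log 2}\d s = \log 2\,\delta_{k,0}$, combined with $\nu_k(\phi_l)=\delta_{kl}$, then annihilates every mode except $k=0$. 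The density of $\Cr^1_c(0,\infty)$ in $\Cdot_0$ together with the contraction bound $\|R_s\|_{\Cdot_0\to\Cdot_0}\leq 1$ extends the identity to all of $\Cdot_0$.

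With this identity in hand, the first convergence follows immediately from the decomposition
\[
\frac{1}{\log 2}\int_t^{t+\log 2}\e^{-s}M_s f\,\d s - \nu_0(f)\phi_0 = \frac{1}{\log 2}\int_t^{t+\log 2}\bigl(\e^{-s}M_s f - R_s f\bigr)\d s,
\]
using Theorem~\ref{thm} for the locally uniform convergence when $f\in \Cdot_0$, and Theorem~\ref{thm:conv_norm} for the $\B(w)$ estimate. To replace $\|f - R_0 f\|_{\B(w)}$ by $\|f - \nu_0(f)\phi_0\|_{\B(w)}$ in the bound, I would observe that $R_0 \phi_0 = \phi_0$ (a direct consequence of $\nu_k(\phi_0)=\delta_{k,0}$), whence $R_0(\nu_0(f)\phi_0)=\nu_0(f)\phi_0$ and therefore $\|f-R_0 f\|_{\B(w)}\leq (1+\|R_0\|)\|f-\nu_0(f)\phi_0\|_{\B(w)}$. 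A straightforward integration of $C\e^{-as}$ over $[t,t+\log 2]$ then yields the desired exponential bound.

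For the Cesàro mean I would use the analogous splitting
\[
\frac{1}{t}\!\int_0^t \!\e^{-s}M_s f\,\d s - \nu_0(f)\phi_0 = \frac{1}{t}\!\int_0^t\!\bigl(\e^{-s}M_s f - R_s f\bigr)\d s + \Bigl(\frac{1}{t}\!\int_0^t\! R_s f\,\d s - \nu_0(f)\phi_0\Bigr).
\]
Writing $t = n\log 2 + r$ with $r\in[0,\log 2)$ and invoking the periodic identity above, the second bracket reduces to $\frac{1}{t}\bigl(\int_{n\log 2}^t R_s f\,\d s - r\,\nu_0(f)\phi_0\bigr)$, which is of size $O(1/t)$ in $\B(w)$ thanks to the boundedness of $R_s$. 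The first term is also $O(1/t)$ in $\B(w)$, obtained by integrating the bound of Theorem~\ref{thm:conv_norm} against $\int_0^{\infty}\e^{-as}\d s$; the locally uniform version for $f\in \Cdot_0$ is handled in the same spirit via dominated convergence and the contraction property.

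The main technical obstacle is the rigorous justification of swapping $\int_0^{\log 2}$ with the Fejér-sum limit defining $R_s f$. I would first carry this out on $\Cr^1_c(0,\infty)$, where the approximation $\e^{-s-n\log 2}M_{s+n\log 2}f\to R_s f$ in Theorem~\ref{thm} is locally uniform jointly in $(s,x)\in[0,\log 2]\times K$, so dominated convergence applies cleanly; the extension to $\Cdot_0$ and to $\B(w)$ is then a routine density/boundedness argument.
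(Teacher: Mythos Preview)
Your proposal is correct and follows essentially the same route as the paper: establish the periodic identity $\frac{1}{\log 2}\int_t^{t+\log 2} R_s f\,\d s = \nu_0(f)\phi_0$ via the Fej\'er representation, then reduce both averages to the convergences of Theorems~\ref{thm} and~\ref{thm:conv_norm}. The paper deduces the local uniform convergence of the Ces\`aro mean as a Ces\`aro consequence of the first limit rather than by your direct splitting, and it is terser than you about passing from $\|f-R_0f\|_{\B(w)}$ to $\|f-\nu_0(f)\phi_0\|_{\B(w)}$; but these are cosmetic differences, not a different argument.
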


\begin{proof}
Let $f\in \Cr^1_c(0,\infty)$.
On the one hand, since $\e^{-t}M_t$ and $R_t$ are contractions in $\Cdot$ and $\e^{-t}M_tf-R_tf$ tends to zero locally uniformly, we have by dominated convergence theorem the local uniform convergence
\[\frac{1}{\log 2}\int_t^{t+\log 2} \e^{-s}M_sf\, \d s-\frac{1}{\log 2}\int_t^{t+\log 2} R_sf\, \d s\xrightarrow[t\to\infty]{}0.\]
On the other hand, due to the convergence
\[\bigg\|R_sf-\sum_{k=-N}^N\Big(1-\frac{|k|}{N}\Big)\langle\nu_k,f\rangle\e^{\frac{2i\pi k}{\log2}s}\phi_k\bigg\|_{\dot\B}\xrightarrow[N\to\infty]{}0\]
we have that for all $t\geq0$
\[\frac{1}{\log 2}\int_t^{t+\log 2} R_sf \d s = \langle\nu_0,f\rangle\phi_0.\]
This proves the local uniform convergence of the first integral of the lemma for $f\in \Cr^1_c(0,\infty)$, which remains valid for $f\in \Cdot_0$ by density.
As a consequence the Cesàro means
\[\frac1N\sum_{n=0}^{N-1}\int_{n\log2}^{(n+1)\log2}\e^{-s}M_sf\,\d s=\frac{1}{N\log2}\int_0^{N\log2}\e^{-s}M_sf\,\d s\]
also converges to $\langle\nu_0,f\rangle\phi_0$ locally uniformly when $N\to\infty$,
and it implies the convergence of the second mapping in the lemma.
The uniform exponential convergence in weighted supremum norm follows from Theorem~\ref{thm:conv_norm}, integrating between $t$ and $t+\log 2$, and the other one is obtained by integrating between $0$ and~$t$.
\end{proof}
The difference between the two speeds in the previous corollary can be interpreted as the difference in the amount of memory kept from the past.

\section{Conclusion}\label{sec:conclusion}

In this work, we investigated how the cyclic asymptotic behavior of the rescaled solutions of Equation~\eqref{eq:mitosis} exhibited in~\cite{BDG} is transposed in the measure setting.
Despite the absence of Hilbert structure, we managed to build a suitable projection on the boundary spectral subspace by taking advantage of the general relative entropy of the dual equation.
It allowed us to obtain the weak-* convergence of the rescaled measure solutions to a periodic behavior.
Then, using Harris's ergodic theorem on time and space discrete sub-problems, we managed to get uniform exponential convergence in weighted total variation norm.
To our knowledge no estimate on the speed of convergence was known before for such problems.
Here we not only prove that the convergence takes place exponentially fast, but we also obtain explicit estimates on the spectral gap in terms of the division rate $B$.

\medskip

In~\cite{GreinerNagel}, more general growth rates than linear are considered, namely those satisfying $g(2x)=2g(x).$
Our method would work in this case, replacing the weight $x$ by the corresponding dual eigenfunction $\phi(x)$ and the space $X$ by the functions such that $f(2x)/\phi(2x)=f(x)/\phi(x)$.
However, considering such general coefficients, while interesting from mathematical point of view, is not motivated by modeling concerns, that is why we decided to focus on the linear case.
In addition, it makes computations lighter, in particular those of the flow which is explicitly given by an exponential when $g(x) = x$.

\medskip

Our method would also apply to more sophisticated models of mitosis.
For instance the equation considered in~\cite{Gabriel2019} exhibits a similar countable family of boundary eigenelements for the singular mitosis kernel.
To the prize of additional technicalities, our approach can be used to study its long time behavior.


\end{document}